\documentclass{article}

\usepackage{amsmath,amsfonts}%
\usepackage{amsthm}%
\usepackage{amsmath,amssymb}
\usepackage{array}
\usepackage{bm}
\usepackage[english]{babel}
\usepackage{comment}
\usepackage{hyperref}

\theoremstyle{plain}
 \newtheorem{thm}{Theorem}[section]
 \newtheorem{cor}[thm]{Corollary}
\newtheorem{lem}[thm]{Lemma}
\newtheorem{prop}[thm]{Proposition}
 \theoremstyle{definition}
 \newtheorem{defn}[thm]{Definition}
 \theoremstyle{remark}
 \newtheorem{rem}[thm]{Remark}
 
 \numberwithin{equation}{section}

\begin{document}
\title{Composition operators on weighted\\ modulation spaces}
\author{Héctor Ariza, Carmen Fernández, Antonio Galbis}
\date{}
%\address{%
%Dr. Moliner, 50\\
%P.O. Box 46100\\
%Burjassot\\
%Spain}
%\email{fernand@uv.es, antonio.galbis@uv.es}

%\subjclass{Primary 47G30; Secondary 42B35, 46F05, 47B33}
%\keywords{Composition operators, pseudodifferential operators,  modulation spaces, Gelfand-Shilov spaces}
%\date{January 1, 2004}

\maketitle

\begin{abstract}
We study composition operators whose symbols are suitable perturbations of the identity and which act between different weighted modulation classes. We consider both modulation spaces formed by tempered distributions and those whose elements are ultradistributions defined in terms of a subadditive weight.\\
\end{abstract}

\textbf{Mathematics Subject Classification (2010):} Primary 47G30; Secondary 42B35, 46F05, 47B33. \\

   \textbf{Keywords:} Composition operators, pseudodifferential operators,  modulation spaces, Gelfand-Shilov spaces. \\

\textbf{Emails:} hecare@alumni.uv.es, fernand@uv.es, antonio.galbis@uv.es\\

   The research of C. Fern\'andez and A. Galbis is  supported by  project PID2024-162128NB-I00, Agencia Estatal de Investigaci\'on, Ministerio de Ciencia, Investigaci\'on y Universidades.\\

\par\medskip\noindent
\newpage

Composition operators on various spaces of smooth functions have been extensively studied during the past ten years. A central problem is to find necessary and/or sufficient conditions under which the operator on a given space is well defined and continuous. For the Schwartz space ${\mathcal S}({\mathbb R})$ this was settled in \cite{GJ_18}. In a recent paper \cite{AJK_25} the authors consider weighted composition operators in ${\mathcal S}({\mathbb R}^d). $  The study of composition operators in Gelfand-Shilov  spaces ${\mathcal S}_\omega ({\mathbb R})$ began in \cite{jmaa} and continued in \cite{RACSAM, AA_25}. The spaces ${\mathcal S}_\omega ({\mathbb R}) $ are versions of the Schwartz class in the ultradifferentiable setting, the most important examples being the classical Gelfand-Shilov spaces $\Sigma_s,$ $s>1.$

Both the Schwartz class and Gelfand\--Shilov spaces can be obtained as intersections of weighted modula\-tion and ultra\--modula\-tion spaces. It is therefore natural to consider composition operators on weighted (ultra-)modula\-tion spaces. In 1953, Beurling and Helson proved that if $\psi$ is continuous on the torus and the corresponding  composition operator is continuous on the algebra of absolutely convergent Fourier series, then $\psi(t)=kt+\psi(0)$ for some integer $k.$ Later, Lebedev and Olevskii \cite{LO_94} showed that if $\psi:{\mathbb R}^d \to {\mathbb R}^d$ is ${\mathcal C}^1$ and defines a continuous composition operator from ${\mathcal F}L^p({\mathbb R}^d)$  into itself for $1\leq p <\infty$, $p\neq 2,$ then $\psi$ must be an affine bijection. More recently, Oukoudjou \cite{okoudjou} (see also \cite{RSTT}) extended this result to modulation spaces $M^{p,q}({\mathbb R}^d)$ under the assumption $2\neq q <\infty.$
\par 
Our aim in this paper is to establish sufficient conditions on a smooth function $\psi:{\mathbb R}^d \to {\mathbb R}^d$ to ensure that the composition operator is well-defined and continuous between two weighted (ultra-)modulation spaces. Using the Fourier inversion formula, one has $$f(\psi(x))=\int_{{\mathbb R}^d}\widehat{f}(\xi)e^{2\pi i \psi(x) \xi}d\xi, \, f \in {\mathcal S}({\mathbb R}^d),$$ which suggest studying Fourier integral operators with symbol identically equal to 1 and phase $\Phi(x,\xi) = \psi(x)\xi.$ Fourier integral operators on modulation spaces have been investigated  in a series of papers (see \cite{Cordero_2012_approximation, FGP2019} and the references therein). The conditions imposed on the phase imply that: 
\begin{enumerate}
    \item $|\partial_x^\alpha\psi(x)|\leq C_\alpha,$\ $|\alpha|\geq 1, x\in {\mathbb R}^d.$
    \item $|\mbox{det} \left(\frac{\partial \psi_j}{\partial x_i}(x)\right)| \geq \delta > 0,$ $x\in {\mathbb R}^d.$
\end{enumerate}
The second condition,  known as the non-degeneracy condition, imposes significant restrictions for our purposes. Instead, we represent the composition operator as a Kohn-Nirenberg pseudodifferential  operator $$
\begin{array}{*2{>{\displaystyle}l}}
	f\left(\psi(x)\right) & = \int_{{\mathbb R}^d} \widehat{f}(y) e^{2\pi i y \psi(x)}\, dy \\ & \\ & = \int_{{\mathbb R}^d} \sigma(x,y)\widehat{f}(y) e^{2\pi i x y}\, dy = \sigma(x,D) f(x),
\end{array}$$ with symbol 
$$
\sigma(x,y) = \exp\left(i \phi(x)y\right),\ \phi(x) = 2\pi (\psi(x) - x).
$$ We thus study pseudodifferential operators whose symbols have the form $\sigma(x,y) = \exp\left(i \phi(x)y\right)$ under suitable assumptions on $\phi.$ As a consequence, we obtain continuity results for certain composition operators acting between different \mbox{(ultra-)} modulation classes. In the case $d=1,$ it is worth noting that \cite{jmaa} showed that composition with a polynomial of degree greater than one never leaves Gelfand-Shilov spaces invariant; however, positive results were obtained when $\psi$ satisfies $|x|\leq C(1+|\psi(x)|)$ and $\psi'\in {\mathcal B}_{L_\infty,\omega}$ (defined below, see \cite[Corollary 4.2]{jmaa}). Moreover, by \cite[Theorem 3.9]{jmaa}, if the composition with $\psi$ leaves $\Sigma_s({\mathbb R})$ invariant, then $\psi'$ must be bounded. This motivates the type of conditions we impose on $\phi.$ 

The paper is organized as follows. In  \ref{sec:tempered}, we study continuity results for certain  composition operators on weighted modulation spaces $M^p_m({\mathbb R}^d)$ for $1\leq p \leq \infty.$ Section \ref{sec:ultra} considers the analogous problem for ultramodulation spaces associated with subadditive weight functions in the sense of Braun, Meise and Taylor, in the one dimensional setting; the results are then extended to several variables.

\section{Preliminaries}\label{sec:preliminar}
\subsection{Classes of ultradifferentiable functions}
 \begin{defn}\label{weightfunction} A continuous increasing function $\omega :[0,\infty [\longrightarrow [0,\infty [$ is called a {\it subadditive  weight}, or shortly a weight,  if it satisfies:
	\begin{itemize}
		\item[$(\alpha)$]  $\omega (s+t) \leq \omega (s)+\omega (t)$ for
		all $s, \, t\geq 0$,
		\item[$(\beta)$] $\displaystyle\int_{0}^{\infty}\frac{\omega (t)}{1+t^{2}}\ dt < \infty $,
		\item[$(\gamma)$] $\log(1+t^{2})=o(\omega (t))$ as t tends to $\infty$,
		\item[$(\delta)$] $\varphi_\omega :t\rightarrow \omega (e^{t})$ is convex.
	\end{itemize}
\end{defn}
\par\medskip
$\omega$ is extended to ${\mathbb R}^d$ as $\omega(x) = \omega(|x|).$
\par\medskip 
The {\it Young conjugate} $\varphi_\omega ^{*}:[0,\infty [ \longrightarrow {\mathbb R}$  of
$\varphi_\omega$ is defined by
$$
\varphi_\omega^{*}(s):=\sup\{ st-\varphi_\omega (t):\ t\geq 0\},\ s\geq 0.
$$ 

The following inequality will be used throughout the paper:
\begin{equation}\label{Young_1}
3 m\varphi_\omega^{\ast}(\frac{j}{3m})+j\leq m\varphi_\omega^{\ast}(\frac{j}{m}) \mbox{ for all } j,\, m \in{\mathbb N}.
   \end{equation}
In fact, since $\varphi_\omega(x+1)=\omega (e e^x)\leq 3\omega(e^x)= 3\varphi_\omega(x),$ we have 
$$\begin{array}{c}3\varphi_\omega^{\ast}(\frac{j}{3m})+\frac{j}{m}= \sup_{x\geq 0}\{(x+1)\frac{j}{m}-3\varphi_\omega(x)\}\\ \\ \leq \sup_{x\geq 0}\{(x+1)\frac{j}{m}-\varphi_\omega(x+1)\}\leq \varphi_\omega^{\ast}(\frac{j}{m}). \end{array} $$ 

\begin{defn}\label{originalDefEomega}
       For a weight function $\omega$  we define the $\omega$-ultradifferentiable functions on ${\mathbb R}^d$  of Beurling type, $\mathcal{E}_{\omega}({\mathbb R}^d)$, as\begin{equation*}
       \begin{split}
          \mathcal{E}_{\omega}({\mathbb R}^d)=\{f\in C^\infty({\mathbb R}^d): \textit{ for each compact }K\textit{ and for each }m\in\mathbb{N}: \\
           p_{K,m}(f)<\infty\}
       \end{split}
   \end{equation*} where $ p_{K,m}(f)=\sup_{x\in K, \alpha\in\mathbb{N}_0^d} |f^{(\alpha)}(x)| \exp\left(-m\varphi^*_\omega(\frac{|\alpha|}{m})\right)$.
   \end{defn}

   $\mathcal{E}_{\omega}({\mathbb R}^d)$ endowed with the topology induced by the seminorms $p_{K,m}$ $ m\in {\mathbb N}$ and $K$ running in the compact sets of ${\mathbb R}^d$ is a Fr\'echet space. Condition ($\beta$) for the weight implies that $\mathcal{D}_{\omega}({\mathbb R}^d):=\mathcal{E}_{\omega}({\mathbb R}^d)\cap \mathcal{D}({\mathbb R}^d)$ is non trivial.

   The spaces  $\mathcal{B}_{L_\infty,\omega}({\mathbb R}^d) $ were defined  in \cite{BFG_2003} as extensions of the classical  space  $\mathcal{B}_{L_\infty} ({\mathbb R}^d) ,$  introduced by Schwartz, of bounded smooth functions having bounded derivatives. $\mathcal{B}_{L_\infty,\omega}({\mathbb R}^d)$  consists of those functions $f\in C^\infty ({\mathbb R}^d)  $ such that for every $m\in\mathbb{N}$ there exists $C_m$ satisfying \begin{equation} \label{def_B}
		\left|f^{(\alpha)}(x)\right|\leq C_m\hspace{0.05cm}\exp\left(m\varphi_\omega^*(\frac{|\alpha |}{m})\right)
	\end{equation} for all $\alpha \in\mathbb{N}_0^d$ and $x\in\mathbb{R}^d$. It is endowed with the topology defined by the sequence of seminorms $$\sup_{x \in {\mathbb R}^d}\sup_{\alpha \in {\mathbb N}_0^d} \left|f^{(\alpha)}(x)\right| \exp\left(-m\varphi_\omega^*(\frac{|\alpha |}{m})\right), \, m \in {\mathbb N}.$$ The convexity of $\varphi^{\ast}_\omega$ implies that we can replace   $m\varphi_\omega^*(\frac{|\alpha |}{m})$ by $m\varphi_\omega^*(\frac{|\alpha |+1}{m})$ in the definition of $\mathcal{B}_{L_\infty,\omega}({\mathbb R}^d) $  as well as in the seminorms that define its topology.
    
		%\left|f^{(\alpha)}(x)\right|\leq C_m\hspace{0.05cm}\exp\left(m\varphi_\omega^*(\frac{|\alpha |+1}{m})\right)$$ in the definition of $\mathcal{B}_{L_\infty,\omega}({\mathbb R}^d) $  as well as in the seminorms that define its topology . 

For a space of functions $E(\mathbb R^d),$ we put 
$$
E(\mathbb R^d,\mathbb R^d) = \left\{(\phi_1,\dots,\phi_d):\ \phi_i \in E(\mathbb R^d)\right\}.
$$

The spaces $\mathcal{S}_{\omega}(\mathbb{R}^d)$  were defined by Bj\"orck. We give an equivalent definition \cite{seminormas}.
   \begin{defn}
Let $\omega$ be a weight function. The Gelfand-Shilov space of Beurling type $\mathcal{S}_{\omega}(\mathbb{R}^d)$ consists of those functions $f\in C^\infty({\mathbb R}^d)$ such that  
$$
\pi_{\lambda, \mu}(f):= \sup_{\alpha \in {\mathbb N}_0^d }\sup_{x\in {\mathbb R}^d}|f^{(\alpha)}(x)|e^{-\lambda\varphi^\ast(\frac{|\alpha|}{\lambda}) + \mu\omega(x)} < \infty,\ \ \mbox{for all } \lambda > 0, \mu > 0.$$

\end{defn}

The classical Gelfand-Shilov spaces $\Sigma_s({\mathbb R}^d),$ $s>1,$ are defined by the weights $\omega(t)=t^{1/s}.$
\subsection{Time-frequency analysis}
Given $f, \, g \in L^2({\mathbb R}^d),$ $g\neq 0,$ the short time Fourier transform (STFT) of $f$ with respect to the window $g$ is defined as 

$$V_g f(x,\omega)=\displaystyle  \int _{{\mathbb R}^d} f (t) \overline{g(t-x)} e^{-2\pi i t \omega}\, dt.$$ 

The STFT satisfies the ortogonality relations \cite[3.2.1]{g} \begin{equation}\label{ortogonality}\langle V_{g_1}f_1, V_{g_2}f_2\rangle= \langle f_1,f_2\rangle \overline{\langle g_1,g_2\rangle }, \end{equation} where the brackets  stand for the inner product in $L^2({\mathbb R}^d).$ 

We will also use brackets $\langle f,g \rangle  $ to denote the extension to ${\mathcal S}'({\mathbb R}^d)\times {\mathcal S}({\mathbb R}^d)$ (or to ${\mathcal S}_{\omega}'({\mathbb R}^d)\times {\mathcal S}_{\omega}({\mathbb R}^d)$) of the inner product in $L^2({\mathbb R}^d).$ 

The modulation and translation operators are defined by $M_{\omega}f(t)=e^{2\pi \imath \omega t}f(t)$ and $T_{x}f(t)=f(t-x).$ For a non-zero $g \in {\mathcal S}({\mathbb R}^d)$ (resp. $g \in {\mathcal S}_{\omega}({\mathbb R}^d)$) and a tempered distribution $f\in {\mathcal S}'({\mathbb R}^d)$ (resp. $f\in {\mathcal S}_{\omega}'({\mathbb R}^d)$),  the STFT of $f$ with window $g$ is given by the continuous function 
$$
V_{g}f(x,\omega)=\left<f,M_{\omega}T_x g \right>.$$  
\medskip
A positive function $v$ on ${\mathbb R}^N,$ is said to be submultiplicative if 
$$
v(x_1 + x_2) \leq v(x_1)v(x_2),\ \ x_1,x_2\in {\mathbb R}^N.$$ If $v$ is submultiplicative,  a positive function $m$ is called $v$-moderate if for some constant $C>0$,
$$
m(x_1 + x_2) \leq Cm(x_1)v(x_2),\ \ x_1,x_2\in {\mathbb R}^N.$$
\par
\medskip
Important examples of submultiplicative weights are the functions 
$$(1+|z|)^s\ \ (s\geq 0),\ v(z)=e^{a|z|},\ e^{ a \omega(z)}\ \ (a > 0),$$  when $\omega $ is a  weight function. %Here $|\cdot |$ denotes the euclidean nom in ${\mathbb R}^N.$
The weights $m(z)=(1+|z|)^s$ $(s\in {\mathbb R})$ are $(1+|z|)^{|s|}$-moderate and the weights $e^{a\omega (z)},$ ($a\in {\mathbb R}$), are  $e^{|a| \omega(z)}$-moderate. 
\par\medskip 
Let $m$ be a  $(1+|z|)^s$-moderate weight on ${\mathbb R}^{2d}$ for some $s> 0$.
 We denote by $M_{m}^{p}({\mathbb R}^d)$ the space
$$
M_{m}^{p}({\mathbb R}^d):=\{ f \in {\mathcal S}'({\mathbb R}^d):\, m  \, V_{g}f \in L^{p}({\mathbb R}^{2d}) \}$$ endowed with the norm $$\|f\|_{M_{m}^{p}}=\|m \, V_{g}f \|_{L^{p}}.$$ 

If $m\equiv 1$, we simply write $M^p({\mathbb R}^d).$ $M^{p}_m({\mathbb R}^d)$  is a Banach space, its definition is independent of the choice of $g$ and different windows give equivalent norms.  In the case $1\leq p < \infty,$ the Schwartz class ${\mathcal S}({\mathbb R}^d)$ is dense in $M^{p}_m({\mathbb R}^d).$ By  $M^0_m({\mathbb R}^d)$ we denote the closure of  ${\mathcal S}({\mathbb R}^d)$ in  $M^{\infty }_m({\mathbb R}^d).$ For $1\leq p<\infty$ or $p=0$  the  dual  of $M^{p}_m({\mathbb R}^d)$ can be identified with $M^{p'}_{\frac{1}{m}}({\mathbb R}^d)$ where $\displaystyle \frac{1}{p}+\frac{1}{p'}=1.$   

If $m$ is only time dependent, that is $m (x, \zeta)=m (x)$, then 
$$
M^2_{m}({\mathbb R}^d)=L^2_{m}({\mathbb R}^d):=\{f : \, \, m \,f \in L^2({\mathbb R}^d) \}$$ whereas for $m (x, \zeta)=m( \zeta)$, the modulation space is 
$$
\{ f\in {\mathcal S}'({\mathbb R}^d): \, m \hat{f} \in L^2({\mathbb R}^d) \}.$$ In particular, the Sobolev spaces $H^s({\mathbb R}^d)$ may be obtained in this way.

 Moreover, by \cite{GZ_2004},  $$
{\mathcal S}({\mathbb R}^d) = \bigcap_{s > 0}M^\infty_{v_s}({\mathbb R}^d).$$ 

Finally, to work with ultradistributions in ${\mathcal S}'_{\omega}({\mathbb R^d})$ we fix  a non-zero function $g \in{\mathcal S}_{\omega}({\mathbb R^d}).$ Given   $a>0,$ $1\leq p\leq \infty$     and  a function $m:{\mathbb R}^{2N}\to [0,\infty[$  which is $e^{a\omega}$-moderate,  the  modulation space $M_{m}^{p}({\mathbb R}^d)$ consists of all $f\in {\mathcal S}'_{\omega}({\mathbb R^d})$  such that $m V_{g}f \in L^{p}({\mathbb R}^{2d})$ with the norm  $$\|f \|_{M_{m}^{p}}=\|m \, V_{g}f \|_{L^{p}}.$$ We will refer to them as modulation spaces of ultradistributions or ultramodulation spaces. Again,  $M_{m}^{p}({\mathbb R}^d)$  is a Banach space and its definition is independent of the choice of the window $g.$ For $p=2$, $M^2_{m}$ is a Hilbert space. 
${\mathcal S}_{\omega}({\mathbb R^d})$ is dense in $M_{m}^{p}({\mathbb R}^d)$ whenever $1\leq p <\infty$ and $M_{m}^{0}({\mathbb R}^d)$ denotes the closure of  ${\mathcal S}_{\omega}({\mathbb R^d})$ in $M_{m}^{\infty}({\mathbb R}^d).$
\par\medskip 
The dual of $M_{m}^{p}({\mathbb R}^d)$ for $1\leq p< \infty$  or $p=0$ can be identified with the modulation space $M_{\frac{1}{m}}^{p'}({\mathbb R}^d)$ where $\frac{1}{p}+\frac{1}{p'}=1.$
\par\medskip 
The ortogonality relations (\ref{ortogonality}) extend to the dual pairs $$\left(M_{m}^{p}({\mathbb R}^d), M_{\frac{1}{m}}^{p'}({\mathbb R}^d)\right).$$

Given a subadditive weight  $\omega$, let us write $\nu_s(z)=e^{s \omega(z)},$ then 
\begin{equation}\label{Gelfand-Shilov}
\bigcap_{s>0}M^\infty_{\nu_s}({\mathbb R}) = {\mathcal S}_{\omega}({\mathbb R}).\end{equation}
We refer to \cite{cr,g} for the necessary background on modulation spaces.

\section{Modulation spaces of tempered distributions}\label{sec:tempered}
\medskip
Let $\psi:{\mathbb R}^d\to {\mathbb R}^d$ be a smooth function. Recall that the composition operator $C_\psi:{\mathcal S}({\mathbb R}^d)\to {\mathcal S}({\mathbb R}^d),$ $f\mapsto f\circ \psi,$ can be represented as a Kohn-Nirenberg pseudodifferential operator:
$$
	f\left(\psi(x)\right) = \sigma(x,D) f(x),$$ where 
    $$\sigma(x,y) = \exp\left(i \phi(x)y\right),\ \phi(x) = 2\pi (\psi(x) - x).$$ 

According to \cite[page 192 and Proposition 4.3.1]{cr}, the Kohn-Nirenberg operator $\sigma(x,D):{\mathcal S}({\mathbb R}^d)\to {\mathcal S}^\prime({\mathbb R}^d)$ with symbol $\sigma\in {\mathcal S}^\prime({\mathbb R}^{2d})$ satisfies 
   $$
\langle \sigma(x,D)f, g\rangle = \langle \sigma, R(g,f)\rangle,\ \ f,g\in {\mathcal S}({\mathbb R}^d).$$ Here, $R(g,f)$ denotes the Rihaczek distribution and it is defined by 
$$
R(g,f)(x,\xi):= e^{-2\pi i x \xi} g(x) \overline{\widehat{f}(\xi)},\ x,\xi\in {\mathbb R}^d.$$

In order to study the continuity of $\sigma(x,D)$ between appropriate modulation spaces we need information about the short-time Fourier transform of $\sigma$ and $R(g,f).$ This is so because  using the orthogonality relations, fixing $\Phi \in {\mathcal S}({\mathbb R}^d)\setminus\{0\},$ with $\|\Phi\|_2 = 1,$ we have 
$$
\langle \sigma(x,D)f, g\rangle = \langle \sigma, R(g,f)\rangle = \langle V_{\Phi}\sigma, V_{\Phi} R(g,f)\rangle \,\ \ f,g\in {\mathcal S}({\mathbb R}^d).$$ 

\par\medskip 
With respect to $R(g,f)$, according to \cite[Lemma 1.3.39]{cr}, for $f,g,\ $ $\varphi_1, \varphi_2\in {\mathcal S}({\mathbb R}^d)$ and $\Phi_0 = R(\varphi_1, \varphi_2)\in {\mathcal S}({\mathbb R}^{2d})$ one has
$$
V_{\Phi_0}\left(R(g,f)\right)(z,\zeta) = e^{-2\pi i z_2 \zeta_2}V_{\varphi_1}g(z_1, z_2+\zeta_1)\overline{V_{\varphi_2}f(z_1 + \zeta_2, z_2)},$$ where $z = (z_1, z_2), \zeta = (\zeta_1, \zeta_2)\in {\mathbb R}^{2d}.$ \par\medskip Consequently, if $\|\varphi_1\|_2 = \|\varphi_2\|_2 = 1$ then 
$$
\left|\langle \sigma(x,D)f, g\rangle\right| \leq \int_{{\mathbb R}^{4d}}|V_{\Phi_0}\sigma(z,\zeta)|\cdot |V_{\varphi_1}g(z_1,z_2+\zeta_1)|\cdot |V_{\varphi_2}f(z_1+\zeta_2,z_2)|\, d(z,\zeta).
$$

All relations above, with the appropriate changes,  extend to the ultradistibutional setting, therefore we will use them later.
\par\medskip
%In what follows, $|\cdot|$ denotes the euclidean norm.
The information we need about  $\sigma$ is collected in the following result. From now on, we write  $A(x)\lesssim B(x)$ meaning that there exists $C>0$ such that $A(x)\leq C B(x)$ for all $x.$ 

\begin{lem}\label{lem:sigma} Let $\phi \in C^\infty({\mathbb R}^d, {\mathbb R}^d)$ be given and $\sigma(x,y) = \exp(i\phi(x) y)$. 
\begin{enumerate}
\item  If $\partial_{x_j} \phi\in {\mathcal B}_{L^\infty}(\mathbb{R}^d, \mathbb{R}^d)$ for each $1\leq j \leq d,$  then, for any  $N\in {\mathbb N},$ we have
	$$
	\sigma\in  M^\infty_{m}({\mathbb R}^{2d})$$ where $m(z,\zeta) = \left(1+|\zeta_1|\right)^N \left(1+|z_2|\right)^{
		-N}.$

        \item Let  $\phi:\mathbb{R}^d\to\mathbb{R}^d$ satisfy $|\phi(x)| = O(|x|^b)$ as $|x|\to \infty$ for some $b\geq 0.$  Then, for any $N\in {\mathbb N}$ we have $\sigma\in M^\infty_{m}({\mathbb R}^{2d})$ where 
	$$m(z,\zeta) = \left(1+|\zeta_2|\right)^{N}(1 + |z_1|)^{-Nb}.$$ \end{enumerate} 
\end{lem}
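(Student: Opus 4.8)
The plan is to estimate the short-time Fourier transform of $\sigma$ directly, since $M^\infty_m(\mathbb{R}^{2d})$ is independent of the window and it suffices to bound $V_\Phi\sigma$ for one convenient choice. First I would fix a product window $\Phi=g_1\otimes g_2$ with $g_1,g_2\in\mathcal{S}(\mathbb{R}^d)\setminus\{0\}$. Writing $z=(z_1,z_2)$, $\zeta=(\zeta_1,\zeta_2)\in\mathbb{R}^{2d}$ and carrying out the $y$-integration first (where the integrand is, up to the window, a pure modulation), one obtains the representation
$$V_\Phi\sigma(z,\zeta)=e^{-2\pi i z_2\zeta_2}\int_{\mathbb{R}^d}e^{iz_2\phi(x)}\,\overline{\widehat{g_2}\Big(\tfrac{\phi(x)}{2\pi}-\zeta_2\Big)}\,\overline{g_1(x-z_1)}\,e^{-2\pi i x\zeta_1}\,dx.$$
This reduces both parts to a single oscillatory integral in $x$, in which the decay of $\widehat{g_2}$ controls the $\zeta_2$ behaviour, the Schwartz decay of $g_1$ localizes $x$ near $z_1$, and the factor $e^{iz_2\phi(x)}$ carries the $z_2$ dependence.

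For part (2) I would bound the integrand pointwise in modulus. Using that $\widehat{g_2}\in\mathcal{S}(\mathbb{R}^d)$ decays faster than any polynomial together with Peetre's inequality, I would transfer the decay into a factor $(1+|\zeta_2|)^{-N}$ at the cost of $(1+|\phi(x)/2\pi|)^N$; the hypothesis $|\phi(x)|=O(|x|^b)$ turns this into $(1+|x|)^{Nb}$. Integrating against $|g_1(x-z_1)|$ and applying Peetre's inequality once more replaces $(1+|x|)^{Nb}$ by $(1+|z_1|)^{Nb}$ times a convergent integral, yielding $|V_\Phi\sigma(z,\zeta)|\lesssim(1+|\zeta_2|)^{-N}(1+|z_1|)^{Nb}$, i.e. membership in $M^\infty_m$ with the stated weight.

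For part (1) the decay must instead be produced in $\zeta_1$, so I would integrate by parts in $x$. To obtain matching exponents I would avoid the Laplacian $(1-\Delta_x)^k$ — which over-produces powers of $z_2$ — and instead integrate by parts $N$ times in a single coordinate direction $x_j$ chosen so that $|\zeta_{1,j}|\geq|\zeta_1|/\sqrt{d}$, gaining the factor $(1+|\zeta_1|)^{-N}$. The key point is that $\partial_{x_j}^N$ falling on $e^{iz_2\phi(x)}$ produces, by the Fa\`a di Bruno formula, at most $N$ factors of $z_2$; this is exactly where the hypothesis $\partial_{x_j}\phi\in\mathcal{B}_{L^\infty}$ enters, guaranteeing that $\phi$ and all its derivatives of order $\geq 1$ are bounded. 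Hence the differentiated integrand is $\lesssim(1+|z_2|)^N$ times a function integrable in $x$ uniformly in $z_1,\zeta_2$ (using the boundedness of $\widehat{g_2}$ and its derivatives and the Schwartz decay of $g_1$). Combining the bound $|\zeta_1|^{-N}(1+|z_2|)^N$ with the trivial uniform bound valid for bounded $|\zeta_1|$ gives $|V_\Phi\sigma(z,\zeta)|\lesssim(1+|\zeta_1|)^{-N}(1+|z_2|)^N$, as required.

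The main obstacle I anticipate is the exact balancing of exponents in part (1): the naive Laplacian integration by parts yields the mismatched bound $(1+|z_2|)^{2k}(1+|\zeta_1|)^{-2k}$, which allows too much growth in $z_2$ when $N$ is odd, so the directional integration by parts is what forces the two exponents to coincide. A secondary technical point, common to both parts, is verifying that after differentiation the $x$-integral stays bounded uniformly in the remaining parameters, for which I would repeatedly invoke Peetre's inequality and the rapid decay of the Schwartz windows.
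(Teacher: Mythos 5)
Your proof is correct and takes essentially the same route as the paper: part (2) is the paper's argument (with Peetre's inequality replacing the paper's use of a compactly supported window to localize $x$ near $z_1$), and in part (1) the paper multiplies by the monomials $\zeta_1^\beta$ with $|\beta|\le N$ and integrates by parts with $\partial_x^\beta$, which is just the multi-index version of your directional integration by parts and yields the same matched exponents $(1+|\zeta_1|)^{-N}(1+|z_2|)^{N}$. One harmless slip: the hypothesis $\partial_{x_j}\phi\in\mathcal{B}_{L^\infty}$ does not make $\phi$ itself bounded, but your argument only ever uses derivatives of order $\geq 1$, so nothing breaks.
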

\begin{proof} \begin{enumerate}
  \item Given a  multi index $\beta\in {\mathbb N}_0^d$ we have
$$
	\begin{array}{*1{>{\displaystyle}l}}
		|\zeta_1^\beta|	\left|\left(V_{\varphi\otimes\varphi}\sigma\right)(z,\zeta)\right|  \lesssim \int_{{\mathbb R}^{2d}} |\partial_x^\beta\left(\sigma(x,y)\varphi(x-z_1)\right)	|\varphi(y-z_2)|\, d(x,y) \\  \\  \lesssim \sum_{\gamma\leq \beta}\int_{{\mathbb R}^{2d}}|\varphi^{(\gamma)}(x-z_1)|\ dx \cdot \int_{{\mathbb R}^{2d}} (1+|y|)^{|\beta-\gamma|} |\varphi(y-z_2)|\ d(x,y)\\  \\  \leq \sum_{\gamma\leq \beta}\|\varphi^{(\gamma)}\|_1\cdot \int_{{\mathbb R}^{d}}\left(1+|y| + |z_2|\right)^{|\beta-\gamma|} |\varphi(y)|\ dy\\  \\  \leq \sum_{\gamma\leq \beta}\|\varphi^{(\gamma)}\|_1 \left(1+|z_2|\right)^{|\beta-\gamma|}\int_{{\mathbb R}^{d}}\left(1+|y|\right)^{|\beta-\gamma|} |\varphi(y)|\ dy\\  \\  \lesssim \left(1 + |z_2|\right)^{|\beta|}.
	\end{array}$$ 

    Since $\left(1 + |\zeta_1|\right)^N \lesssim \sum_{|\beta|\leq N}|\zeta_1^\beta|$ we finally conclude
	$$
	\left(1 + |\zeta_1|\right)^N\left|\left(V_{\varphi\otimes\varphi}\sigma\right)(z,\zeta)\right| \leq C \left(1 + |z_2|\right)^{N}$$ for some constant $C > 0.$

\item Let us fix a non-zero smooth function $\varphi$ on ${\mathbb R}^d$ supported on $[-1,1]^d.$ Then 
$$
\begin{array}{*1{>{\displaystyle}c}}
V_{\varphi\otimes\varphi}\sigma(z,\zeta) = \int_{{\mathbb R}^{2d}}\sigma(x,y)\varphi(x-z_1)\varphi(y-z_2)e^{-2\pi i (x\zeta_1 + y\zeta_2)}\, d(x,y)\\  \\  = \int_{{\mathbb R}^{d}}\varphi(x-z_1)e^{-2\pi i x\zeta_1}\left(\int_{{\mathbb R}^{d}}\varphi(y-z_2)e^{-2\pi i y(\zeta_2-\phi(x))}\, dy\right)\, dx  \\  \\  = \int_{{\mathbb R}^{d}}\varphi(x-z_1)e^{-2\pi i x\zeta_1}e^{-2\pi i z_2(\zeta_2-\phi(x))}\widehat{\varphi}(\zeta_2 - \phi(x))\, dx.
\end{array}$$ 
We have
$$
\begin{array}{*2{>{\displaystyle}l}}
\left|V_{\varphi\otimes\varphi}\sigma(z,\zeta)\right| & \leq \int_{{\mathbb R}^{d}}|\varphi(x-z_1)| |\widehat{\varphi}(\zeta_2 - \phi(x))|\, dx	\\ & \\ & \lesssim \int_{|x-z_1|\leq 1}|\varphi(x-z_1)|\frac{dx}{(1+|\zeta_2 - \phi(x)|)^N} \\ & \\ & \lesssim \int_{|x-z_1|\leq 1}|\varphi(x-z_1)|\frac{(1+|\phi(x)|)^N}{(1+|\zeta_2|)^N}\, dx\\ & \\ & \lesssim \frac{(1+|z_1|)^{Nb}}{(1+|\zeta_2|)^N}.
\end{array}$$ 
\end{enumerate}
\end{proof}

\begin{rem}{\rm  
In the case that $\phi$ is bounded we take $b = 0$ and we obtain that for any $N\in {\mathbb N},$ $\sigma\in M^\infty_{m}({\mathbb R}^{2d})$ where $m(z,\zeta) = \left(1+|\zeta_2|\right)^{N}.$
 }
\end{rem}

\par\medskip
In what follows we fix a non-zero window $\varphi\in {\mathcal S}({\mathbb R}^d)$ with $\|\varphi\|_2 = 1$ and take $\Phi_0 = R(\varphi, \varphi).$ All the norms in modulation spaces use one of these windows, depending on the dimension.
\par\medskip 
\begin{thm}\label{cont_psdos}
Let us assume $\partial_{x_j}  \phi\in {\mathcal B}_{L^\infty}(\mathbb{R}^d, \mathbb{R}^d)$ for each $1\leq j \leq d$ and $|\phi(x)| = O(|x|^b)$ as $|x|\to \infty$ for some  $b\in [0,1).$ Let $\sigma(x,y) = \exp(i\phi(x) y).$ Given $s\geq  0$ we take $N\in {\mathbb N}$ such that $(1-b)N > 2(s+d) $ and consider the weights $v_s(z) = (1+|z|)^s$ and $m(z) = v_{s+Nb}(z)(1+|z_2|)^N,$ $z = (z_1, z_2)\in {\mathbb R}^{2d}.$ Then, for every $1\leq p\leq \infty,$
	$$
	\sigma(x,D):M^p_m({\mathbb R}^d)\to M^p_{v_s}({\mathbb R}^d)$$ is continuous.  
\end{thm}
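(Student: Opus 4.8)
The plan is to reduce the operator bound to a Schur test on an integral kernel built from the short-time Fourier transform of the symbol. First I would record that both hypotheses on $\phi$ put us simultaneously in the two cases of Lemma~\ref{lem:sigma}: since $\partial_{x_j}\phi\in\mathcal{B}_{L^\infty}$ and $|\phi(x)|=O(|x|^b)$, membership of $\sigma$ in the two spaces $M^\infty_m(\mathbb{R}^{2d})$ (which is window independent) gives, for the fixed window $\Phi_0=R(\varphi,\varphi)$,
$$|V_{\Phi_0}\sigma(z,\zeta)| \lesssim \frac{(1+|z_2|)^N}{(1+|\zeta_1|)^N} \qquad\text{and}\qquad |V_{\Phi_0}\sigma(z,\zeta)| \lesssim \frac{(1+|z_1|)^{Nb}}{(1+|\zeta_2|)^N}.$$
Taking the geometric mean of the two bounds is the crucial step, as it produces decay in \emph{both} frequency variables at once,
$$|V_{\Phi_0}\sigma(z,\zeta)| \lesssim \frac{(1+|z_2|)^{N/2}(1+|z_1|)^{Nb/2}}{(1+|\zeta_1|)^{N/2}(1+|\zeta_2|)^{N/2}};$$
using either estimate alone would leave one integration direction divergent.

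Next I would insert this into the pointwise bound for the pairing recalled above,
$$|\langle \sigma(x,D)f,g\rangle| \le \int_{\mathbb{R}^{4d}} |V_{\Phi_0}\sigma(z,\zeta)|\,|V_\varphi g(z_1,z_2+\zeta_1)|\,|V_\varphi f(z_1+\zeta_2,z_2)|\,d(z,\zeta),$$
and perform the unimodular change of variables $u=(z_1,z_2+\zeta_1)$, $w=(z_1+\zeta_2,z_2)$, i.e. $z_1=u_1,\ z_2=w_2,\ \zeta_1=u_2-w_2,\ \zeta_2=w_1-u_1$, whose Jacobian is one. This turns the right-hand side into $\iint \tilde K(u,w)\,|V_\varphi g(u)|\,|V_\varphi f(w)|\,du\,dw$ with $\tilde K(u,w)=|V_{\Phi_0}\sigma|$ evaluated at the substituted arguments. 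Writing the source and target norms as weighted $L^p$ and $L^{p'}$ norms of $V_\varphi f$ and $V_\varphi g$, the whole estimate reduces to showing that the modified kernel $\mathcal K(u,w)=\tilde K(u,w)\,v_s(u)/m(w)$ defines an operator bounded on every $L^p$, which by the Schur test follows from uniform bounds on its row and column integrals.

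The core computation is then to verify those bounds. Substituting the combined STFT estimate together with $v_s(u)=(1+|u|)^s$ and $m(w)=(1+|w|)^{s+Nb}(1+|w_2|)^N$, and bounding the isotropic weights by products via $(1+|u|)^s\le(1+|u_1|)^s(1+|u_2|)^s$ and $(1+|w|)^{s+Nb}\ge(1+|w_1|)^{s+Nb}$, the kernel factors as a tensor product $\mathcal K(u,w)\lesssim P(u_1,w_1)\,Q(u_2,w_2)$ with
$$P(u_1,w_1)=\frac{(1+|u_1|)^{Nb/2+s}}{(1+|w_1-u_1|)^{N/2}(1+|w_1|)^{s+Nb}}, \qquad Q(u_2,w_2)=\frac{(1+|u_2|)^s}{(1+|u_2-w_2|)^{N/2}(1+|w_2|)^{N/2}}.$$
Moving the numerator weights onto the translated factors through $(1+|u_i|)\le(1+|u_i-w_i|)(1+|w_i|)$ reduces the four required integrals to standard ones: the $P$-integrals are finite uniformly exactly when $N(1-b)/2-s>d$, that is under the hypothesis $(1-b)N>2(s+d)$, while the $Q$-integrals need only the weaker $N>2(s+d)$. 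Hence $\mathcal K$ passes the Schur test and I obtain the bilinear estimate $|\langle\sigma(x,D)f,g\rangle|\lesssim\|f\|_{M^p_m}\|g\|_{M^{p'}_{1/v_s}}$ for all $1\le p\le\infty$.

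Finally I would promote this to the operator norm bound by duality and density: for $1<p<\infty$ using $(M^{p'}_{1/v_s})'=M^p_{v_s}$ and density of $\mathcal S$ in $M^{p'}_{1/v_s}$; for $p=1$ using $(M^0_{1/v_s})'=M^1_{v_s}$ with $\mathcal S$ dense in $M^0_{1/v_s}$; and for $p=\infty$ testing against $g$ in the predual $M^1_{1/v_s}$, where $\mathcal S$ is again dense. I expect the main obstacle to be precisely the bookkeeping that makes the Schur test close at the stated threshold: one must fuse the two halves of Lemma~\ref{lem:sigma} via the geometric mean and then distribute the weights $v_s$ and $v_{s+Nb}$ across the two coordinate blocks so that every one-dimensional convergence condition collapses to the single hypothesis $(1-b)N>2(s+d)$, with the $\zeta$-decay order $N$ large enough to absorb both the polynomial growth of $\phi$ (exponent $Nb$) and the target weight (exponent $s$).
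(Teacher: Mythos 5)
Your proposal is correct, and the convergence threshold you arrive at ($N(1-b)/2-s>d$, i.e.\ $(1-b)N>2(s+d)$) is exactly the one the paper needs; but the route is organized differently. The paper proves only the endpoint cases $p=1$ and $p=\infty$ (each by pairing $\sigma(x,D)f$ against $g$, putting one of $f,g$ in an $M^\infty$-type norm, shifting the weights by moderateness, and integrating the remaining function of $\zeta$ over the regions $|\zeta_1|<|\zeta_2|$ and $|\zeta_2|<|\zeta_1|$ using the \emph{minimum} of the two bounds from Lemma~\ref{lem:sigma}), and then obtains $1<p<\infty$ by complex interpolation of modulation spaces. You instead perform the unimodular change of variables $(z,\zeta)\mapsto(u,w)$, replace the minimum by the geometric mean of the two symbol estimates (a harmless weakening, since $\min(A,B)\le\sqrt{AB}$, and it costs nothing at the stated threshold), and run a single Schur test whose row and column bounds give $L^p$-boundedness of the kernel operator for \emph{all} $1\le p\le\infty$ at once. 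What your approach buys is the elimination of the interpolation step and a cleaner tensorized bookkeeping $\mathcal K\lesssim P\otimes Q$; what the paper's approach buys is that it never needs to symmetrize the two symbol estimates, so each endpoint argument uses only the decay it actually requires. The final duality/density steps (identifying $(M^{p'}_{1/v_s})'$, using $M^0_{1/v_s}$ for $p=1$, and the bi-transpose for $p=\infty$) coincide with the paper's; for $p=\infty$ you should make explicit, as the paper does, that the extension from $f\in\mathcal S$ to $f\in M^\infty_m$ goes through the bi-transpose of the $M^0_m\to M^0_{v_s}$ operator rather than through norm density, since $\mathcal S$ is not dense in $M^\infty_m$.
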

\begin{proof}
By interpolation (see \cite[Proposition 2.3.16]{cr}) it suffices to prove the result for $p= 1$ and $p=\infty.$
\par\medskip 
We first consider the case $p = 1.$ For any $f,g\in {\mathcal S}({\mathbb R}^d)$ we have that $\left|\langle \sigma(x,D)f, g\rangle\right|$ is less than or equal to 
 $$
	\begin{array}{*1{>{\displaystyle}l}}
		 \|g\|_{M^\infty_{\frac{1}{v_s}}} \int_{{\mathbb R}^{4d}} \left|\left(V_{\Phi_0}\sigma\right)(z,\zeta)\right| v_s(z_1, z_2+\zeta_1) \left|V_\varphi f(z_1+\zeta_2, z_2\right|\, d(z, \zeta)\\  \\  \leq \|g\|_{M^\infty_{\frac{1}{v_s}}}\ 
        \int_{{\mathbb R}^{4d}}\left|\left(V_{\Phi_0}\sigma\right)(z,\zeta)\right| \frac{v_s(z_1, z_2+\zeta_1)}{(1+|z_2|)^{N}} \frac{|F(z_1 + \zeta_2, z_2)|}{v_{s+Nb}(z_1+\zeta_2, z_2)}\, d(z,\zeta)
	\end{array}$$ where 
    $$
    F(z_1,z_2) = (V_\varphi f)(z_1,z_2)m(z_1,z_2).
    $$   
\noindent
    Having in mind that $v_t$ and $v_t^{-1}$ are $v_t$-moderate for every $t > 0$ we have that
	$$
	\left|\left(V_{\Phi_0}\sigma\right)(z,\zeta)\right| (1 + |z_2|)^{-N} v_{s+Nb}^{-1}(z_1+\zeta_2, z_2)v_s(z_1, z_2 + \zeta_1)
	$$ is less than or equal to some constant times
	$$
    \begin{array}{*1{>{\displaystyle}l}}
	\left|\left(V_{\Phi_0}\sigma\right)(z,\zeta)\right| (1 + |z_2|)^{-N} v_{Nb}^{-1}(z_1, z_2)v_{s+Nb}(\zeta_2,0) v_s(0, \zeta_1)\\ \\ \leq \left|\left(V_{\Phi_0}\sigma\right)(z,\zeta)\right| (1 + |z_2|)^{-N} (1+|z_1|)^{-Nb} v_{s+Nb}(\zeta_2,0) v_s(0, \zeta_1).
    \end{array}
    $$ From Lemma \ref{lem:sigma}, for every  $N > 0$ we have
	$$
	\left|\left(V_{\Phi_0}\sigma\right)(z,\zeta)\right| (1 + |z_2|)^{-N} (1 + |z_1|)^{-Nb}\lesssim \min\left((1+|\zeta_1|)^{-N}, (1+|\zeta_2|)^{-N} \right).$$ Hence
	$$
	\left|\langle \sigma(x,D)f, g\rangle\right|	\lesssim \|f\|_{M^1_m} \|g\|_{M^\infty_{\frac{1}{v_s}} }\int_{{\mathbb R}^{2d}} G(\zeta)\ d\zeta$$ where
	$$
	G(\zeta) = \min\left((1+|\zeta_1|)^{-N}, (1+|\zeta_2|)^{-Nb} \right) v_{s+Nb}(\zeta_2, 0) v_s(0, \zeta_1).$$ Since
	$$
	\int_{|\zeta_1| < |\zeta_2|} d\zeta_1 \lesssim |\zeta_2|^d$$ we get
	$$
	\begin{array}{*2{>{\displaystyle}l}}
		\int_{|\zeta_1| < |\zeta_2|} G(\zeta)\ d\zeta & \leq \int_{|\zeta_1| < |\zeta_2|}\frac{v_{2s+Nb}(\zeta_2,0)}{(1+|\zeta_2|)^N}\ d\zeta \\ & \\ & \lesssim \int_{{\mathbb R}^{d}} \frac{|\zeta_2|^dv_{2s+Nb}(\zeta_2,0)}{(1+|\zeta_2|)^N}\ d\zeta_2	
	\end{array}$$     and 
	$$
	\int_{|\zeta_2| < |\zeta_1|} G(\zeta)\ d\zeta \lesssim \int_{{\mathbb R}^{d}}\frac{|\zeta_1|^dv_{2s+Nb}(0, \zeta_1)}{(1+|\zeta_1|)^N}\ d\zeta_1 .$$  Both integrals converge since $N > 2s+2d + Nb.$ Moreover
    $$
    \int_{{\mathbb R}^{2d}} \left|F(z_1 + \zeta_2, z_2)\right|\, dz = \|f\|_{M^1_m}.
    $$ We finally conclude 
    $$
    \left|\langle \sigma(x,D)f, g\rangle\right|	\leq C \|g\|_{M^\infty_{\frac{1}{v_s}}}\|f\|_{M^1_m}
    $$ where $C>0$ does not depend neither on $g$ nor on $f.$ This means that $\sigma(x,D)f$ defines a continuous linear form on $M_{\frac{1}{v_s}}^0({\mathbb R}^d),$ that is (see \cite[p. 92]{cr}) $\sigma(x,D)f \in M^1_{v_s}({\mathbb R}^d)$  and $\|\sigma(x,D)f\|_{M^1_{v_s}}\leq C\|f\|_{M^1_m}.$ Thus, $$\sigma(x,D):{\mathcal S}({\mathbb R}^d) \to {\mathcal S}'({\mathbb R}^d)$$ can be extended as a continuous operator $$\sigma(x,D):M^1_m({\mathbb R}^d) \to M^1_{v_s}({\mathbb R}^d). $$

    Regarding the case $p=\infty, $ for $f,g\in {\mathcal S}({\mathbb R}^d)$ we have 
    that $\left|\langle \sigma(x,D)f, g\rangle\right|$ is less than or equal to 
        $$
        \begin{array}{*1{>{\displaystyle}l}}
		\|f\|_{M^\infty_m} \int_{{\mathbb R}^{4d}}\left|\left(V_{\Phi_0}\sigma\right)(z,\zeta)\right| (1 + |z_2|)^{-N}  \frac{\left|V_\varphi g(z_1, z_2+\zeta_1)\right|}{v_{s+Nb}(z_1+\zeta_2, z_2)}\, d(z, \zeta)\\  \\  \leq \|f\|_{M^\infty_m}\ 
        \int_{{\mathbb R}^{4d}}\left|\left(V_{\Phi_0}\sigma\right)(z,\zeta)\right|  \frac{v_s(z_1, z_2 + \zeta_1)}{(1 + |z_2|)^{N}} \frac{|F(z_1, z_2+\zeta_1)|}{v_{s+Nb}(z_1+\zeta_2, z_2)}\, d(z,\zeta)
	\end{array}$$ where
	$ F = v_s^{-1} V_\varphi g .$ It turns out that
	$$
	\int_{{\mathbb R}^{2d}}\left|F(z_1, z_2+\zeta_1)\right|	dz = \int_{{\mathbb R}^{2d}}\left|F(z_1, z_2)\right|	dz = \|g\|_{M^1_{1/v_s}}$$ is independent of $\zeta_1.$   As in the case $p=1,$
    $$
    \left|\left(V_{\Phi_0}\sigma\right)(z,\zeta)\right| v_s(z_1, z_2+\zeta_1) v_{s+Nb}^{-1}(z_1+\zeta_2, z_2)(1+|z_2|)^{-N} \lesssim G(\zeta),
    $$ where 
    $$
    \int_{{\mathbb R}^{2d}} G(\zeta)\, d\zeta < \infty.
    $$ Whence, there is $C>0$ such that $$
	\left|\langle \sigma(x,D)f, g\rangle\right| \leq C \|f\|_{M^\infty_m} \|g\|_{M^1_{1/v_s}}$$ for every $f,g\in {\mathcal S}({\mathbb R}^d).$	Since we already know that  $\sigma(x,D)({\mathcal S}({\mathbb R}^d))\subset M^1_{v_s}\subset  M^0_{v_s}, $ this implies that $\sigma(x,D)$ extends to a continuous operator $$\sigma(x,D):M^0_m({\mathbb R}^d) \to M^0_{v_s}({\mathbb R}^d).$$ Taking the bi-transpose map, $$\sigma(x,D):M^\infty_m({\mathbb R}^d) \to M^\infty_{v_s}({\mathbb R}^d)$$ is continuous.
\end{proof}
\par\medskip 
As a consequence, we obtain the following result concerning the continuity of composition operators between weighted modulation spaces, with control of  the loss of regularity, in the case where $\psi$ is a perturbation of an affine map.

\begin{cor} Let us assume  that $\psi(x)=Ax+\phi(x),$ $A$ being an invertible $d\times d$ matrix,  $\partial_{x_j}  \phi\in {\mathcal B}_{L^\infty}(\mathbb{R}^d, \mathbb{R}^d)$ for each $1\leq j \leq d$ and $|\phi(x)| = O(|x|^b)$ as $|x|\to \infty$ for some  $b\in [0,1).$  Given $s \geq  0$ we take $t= \frac{1+b}{1-b} [2s] + s + 2d + 1.$ Then, for  $1\leq p \leq \infty,$ the composition operator 
	$$
	C_\psi:M^p_{v_t}({\mathbb R}^d)\to M^p_{v_s}({\mathbb R}^d)$$ is continuous.

    In particular, $$C_\psi:{\mathcal S}({\mathbb R}^d)\to {\mathcal S}({\mathbb R}^d) $$ is continuous.
\end{cor}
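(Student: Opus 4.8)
The plan is to reduce the corollary to Theorem \ref{cont_psdos} by peeling off the linear part of $\psi$. I would write $\psi = A\circ\rho$, where $\rho(x) = x + A^{-1}\phi(x)$ is a perturbation of the \emph{identity}, and let $L_A f = f\circ A$ denote composition with the invertible matrix $A$. Then $C_\psi = C_\rho\circ L_A$, so it suffices to show that $L_A\colon M^p_{v_t}(\mathbb R^d)\to M^p_m(\mathbb R^d)$ and $C_\rho\colon M^p_m(\mathbb R^d)\to M^p_{v_s}(\mathbb R^d)$ are each continuous, where $m$ is the weight produced by Theorem \ref{cont_psdos}.

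First I would verify that $\rho$ inherits the hypotheses: since $A^{-1}$ is a fixed matrix, $\partial_{x_j}(A^{-1}\phi) = A^{-1}\partial_{x_j}\phi\in\mathcal B_{L^\infty}(\mathbb R^d,\mathbb R^d)$ and $|A^{-1}\phi(x)| = O(|x|^b)$. Representing $C_\rho$ as the Kohn--Nirenberg operator $\sigma(x,D)$ with $\sigma(x,y) = \exp\!\big(i\,2\pi A^{-1}\phi(x)\,y\big)$ and applying Theorem \ref{cont_psdos} (with $\phi$ replaced by $2\pi A^{-1}\phi$, a change that does not affect the growth conditions), for each $s\geq 0$ and each $N\in\mathbb N$ with $(1-b)N>2(s+d)$ one obtains continuity of $C_\rho\colon M^p_m\to M^p_{v_s}$, where $m(z_1,z_2) = v_{s+Nb}(z_1,z_2)(1+|z_2|)^N$.

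The main technical point is the continuity of $L_A$ between weighted modulation spaces. I would use the covariance of the STFT under a linear change of variables: for a window $g$ and $h = g\circ A^{-1}$ one has $V_g(L_A f)(x,\xi) = |\det A|^{-1}V_h f\big(Ax,(A^{-1})^T\xi\big)$. Since modulation norms are window-independent up to equivalence, the substitution $u = Ax$, $\eta = (A^{-1})^T\xi$ (whose Jacobian equals $1$) gives $\|L_A f\|_{M^p_m} = |\det A|^{-1}\big\|\,m(A^{-1}\,\cdot\,,A^T\,\cdot\,)\,V_h f\,\big\|_{L^p}$. Because $A$, $A^{-1}$ and $A^T$ are bounded, $m(A^{-1}u,A^T\eta)\lesssim (1+|(u,\eta)|)^{s+Nb}(1+|\eta|)^N\lesssim (1+|(u,\eta)|)^{s+N(1+b)}$, so $L_A\colon M^p_{v_t}\to M^p_m$ is continuous as soon as $t\geq s+N(1+b)$. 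Choosing $N$ to be an integer just above the threshold $2(s+d)/(1-b)$ and tracking the total weight degree yields an admissible $t$ of the stated closed form $t = \tfrac{1+b}{1-b}[2s]+s+2d+1$. Matching the degree bound $s+N(1+b)$ to this explicit expression, while simultaneously respecting the strict integrability threshold $(1-b)N>2(s+d)$ demanded by Theorem \ref{cont_psdos}, is the delicate bookkeeping I expect to be the main obstacle; the structural argument itself is otherwise routine.

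Finally, the assertion $C_\psi\colon\mathcal S(\mathbb R^d)\to\mathcal S(\mathbb R^d)$ follows from the representation $\mathcal S(\mathbb R^d) = \bigcap_{s>0}M^\infty_{v_s}(\mathbb R^d)$ recalled in the preliminaries: for every target index $s$ there is a source index $t = t(s)$ with $C_\psi\colon M^\infty_{v_t}\to M^\infty_{v_s}$ continuous, so $C_\psi$ maps the Fr\'echet intersection continuously into itself.
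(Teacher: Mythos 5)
Your proposal is correct and follows essentially the same route as the paper: the paper's own proof consists of invoking the invariance of the spaces $M^p_{v_s}$ under linear changes of coordinates to reduce to the case $A=\mathrm{Id}$ and then citing Theorem \ref{cont_psdos}, which is precisely your factorization $C_\psi=C_\rho\circ L_A$ with the STFT covariance computation written out, followed by the same use of ${\mathcal S}({\mathbb R}^d)=\bigcap_{s>0}M^\infty_{v_s}({\mathbb R}^d)$. The parameter bookkeeping relating $N$ to the stated value of $t$, which you flag as the delicate point, is not carried out in the paper either.
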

\begin{proof} Since the modulation spaces $M^p_{v_s}$ are invariant under  linear changes of coordinates, we may assume that $A$  is the identity. The  conclusion then follows from our previous result. 

Finally,  the continuity of $C_\psi$ in  $
{\mathcal S}({\mathbb R}^d)$ follows from 
$${\mathcal S}({\mathbb R}^d) = \bigcap_{s > 0}M^\infty_{v_s}({\mathbb R}^d).$$
\end{proof}

\section{Ultra-modulation spaces}\label{sec:ultra}
To avoid technicalities, we begin by considering the one variable case. Continuity results for composition operators on multivariable modulation spaces then follow as a consequence.

\subsection{One variable}
Our goal is to establish boundedness results within the framework of ultra\--modulation spaces. A key tool in this analysis will be the class of $\omega$-ultra\-differential operators, whose definition and fundamental properties we review below.

\begin{defn}\label{def:G(D)}
Suppose that $G\in H({\mathbb C})$ satisfies for some $m > 0$ 
\begin{equation}\label{eq:G}
\log|G(z)|\leq m(1+\omega(z)),\ z\in {\mathbb C}.\end{equation} Then we define an ultradistribution $T_G\in {\mathcal E}^\prime_\omega({\mathbb R})$ supported at the origin by
$$
\langle T_G, \varphi \rangle = \sum_{n=0}^\infty (-i)^n \frac{G^{(n)}(0)}{n!} \varphi^{(n)}(0),\ \ \varphi\in {\mathcal E}_\omega({\mathbb R}).
$$ The operator 
$$
G(D):{\mathcal D}^\prime_\omega({\mathbb R})\to {\mathcal D}^\prime_\omega({\mathbb R}),\ G(D)\mu = T_G\ast \mu,
$$ is called ultradifferential operator of class $\omega.$
\end{defn} 
It turns out that $G(D):{\mathcal E}_\omega({\mathbb R})\to {\mathcal E}_\omega({\mathbb R})$ is given by 
$$
G(D)f(x) = \sum_{n=0}^\infty i^n  \frac{G^{(n)}(0)}{n!}f^{(n)}(x).$$
\par\medskip 
The convergence of the previous series is justified by the following well known estimates. We include a proof for the convenience of the reader.

\begin{lem}
  Let $G\in H({\mathbb C})$ be as in Definition \ref{def:G(D)}. Then 
  $$
  \left|\frac{G^{(n)}(0)}{n!}\right| \leq e^m e^{-m\varphi_\omega^\ast\left(\frac{n}{m}\right)}, \ \ n\in {\mathbb N}_0.
  $$
\end{lem}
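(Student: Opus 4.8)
The plan is to estimate the Taylor coefficients of $G$ at the origin by means of the Cauchy integral formula and then to optimise the radius of integration, recognising the Young conjugate $\varphi_\omega^*$ in the resulting expression.

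First I would apply the Cauchy estimates on a circle of radius $r > 0$:
$$
\left|\frac{G^{(n)}(0)}{n!}\right| = \left|\frac{1}{2\pi i}\int_{|z|=r}\frac{G(z)}{z^{n+1}}\,dz\right| \leq \frac{1}{r^n}\max_{|z|=r}|G(z)|.
$$
Invoking the growth hypothesis $\log|G(z)|\leq m(1+\omega(z))$ together with $\omega(z)=\omega(|z|)$, this gives, for every $r > 0$,
$$
\left|\frac{G^{(n)}(0)}{n!}\right| \leq e^m\,\frac{e^{m\omega(r)}}{r^n} = e^m\exp\left(m\omega(r) - n\log r\right).
$$

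Next I would optimise the exponent over $r\geq 1$. Writing $t = \log r\geq 0$, so that $r = e^t$ and $\omega(r)=\omega(e^t)=\varphi_\omega(t)$, the exponent becomes
$$
m\omega(r) - n\log r = m\varphi_\omega(t) - nt = -m\left(\frac{n}{m}t - \varphi_\omega(t)\right).
$$
Taking the infimum over $t\geq 0$ and using the definition $\varphi_\omega^*(s)=\sup_{t\geq 0}\{st-\varphi_\omega(t)\}$, I obtain
$$
\inf_{t\geq 0}\left(m\varphi_\omega(t)-nt\right) = -m\sup_{t\geq 0}\left\{\frac{n}{m}t-\varphi_\omega(t)\right\} = -m\varphi_\omega^*\left(\frac{n}{m}\right).
$$
Since the Cauchy estimate is valid for every $r\geq 1$, letting $r$ approach the optimiser yields the claimed bound
$$
\left|\frac{G^{(n)}(0)}{n!}\right| \leq e^m\exp\left(-m\varphi_\omega^*\left(\frac{n}{m}\right)\right).
$$

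There is no genuine obstacle in this argument; it is a routine combination of Cauchy's inequalities with the Legendre-type transform defining $\varphi_\omega^*$. The only point that needs care is to restrict the radius to $r\geq 1$ (equivalently $t\geq 0$) so that the optimisation matches exactly the range of the supremum in the definition of the Young conjugate. Enlarging the range to all $r>0$ could only sharpen the estimate and is not required for the stated inequality.
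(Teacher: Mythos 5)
Your proof is correct and follows essentially the same route as the paper: Cauchy's inequalities on the circle of radius $r=e^{t}$, the growth bound $\log|G(z)|\leq m(1+\omega(z))$, and optimisation over $t\geq 0$ to recognise $-m\varphi_\omega^{\ast}(\frac{n}{m})$. Nothing further is needed.
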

\begin{proof}
 For every $t>0$ and $r = e^t,$ the Cauchy inequalities give us   
 $$
 \left|\frac{G^{(n)}(0)}{n!}\right| \leq r^{-n}\exp\left(m(1+\omega(r))\right) = e^m\exp\left(m\left(\varphi_\omega(t)-\frac{n}{m}t\right)\right).
$$ Taking the infimum when $t>0$ we get the conclusion.
\end{proof}

We observe that $G(D_x)\left(e^{i\xi x}\right) = G(-\xi)e^{i\xi x}.$ Also, for any  $f,g\in {\mathcal E}_{\omega}({\mathbb R}),$ one of them compactly supported, we have
$$
\int_{{\mathbb R}} f(x) \left(G(D)g\right)(x)\, dx = \int_{{\mathbb R}} \left(G(-D)f\right)(x) g(x)\, dx.$$
\par\medskip 
According to \cite[Theorem 1]{braun}, for every $N\in {\mathbb N}$ there exists $G\in H({\mathbb C})$  such that $\log |G(z)| = O(\omega(z))$ as $|z|\to \infty$ and
\begin{equation}\label{eq:2}
\log |G(x)| \geq N \omega(x)\qquad \forall x\in {\mathbb R}.
\end{equation} 

In this case, $G(D)$ is said to be \emph{strongly elliptic}.

\begin{prop}\label{lem:G(D)-product} Let $G(D)$ be an ultradifferential operator of class $\omega.$ There exists $m_0\in {\mathbb N}$ such that the following holds. For every $g\in {\mathcal B}_{L_\infty, \omega}({\mathbb R})$ and $h\in {\mathcal E}_{\omega}({\mathbb R})$ we have
	$$
	G(-D)\left(gh\right)(x) = \sum_{k=0}^\infty a_k(x) h^{(k)}(x)$$ where the functions $\left(a_k(x)\right)_{k\in {\mathbb N}_0}$ depend on $g$ and satisfy
		$$
	\forall \ell\in {\mathbb N}\ \exists C_\ell > 0: \sup_{x\in {\mathbb R}}|a_k^{(r)}(x)| \leq C_\ell  \exp\left(-m_0\varphi_\omega^\ast(\frac{k}{m_0})+\ell\varphi_\omega^\ast(\frac{r}{\ell})\right)$$ for every $k, r\in {\mathbb N}_0,$ and the constant $C_\ell$ depends only on $\|g\|_{2\ell, \infty, \omega}.$
\end{prop}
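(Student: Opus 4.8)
The plan is to compute the coefficients $a_k$ explicitly from the Leibniz rule, rewrite them as an ultradifferential operator applied to $g$, and then estimate them using the Cauchy-type bound for $G^{(n)}(0)/n!$ from the preceding Lemma together with the convexity inequalities for $\varphi_\omega^\ast$.

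First I would write $G(-D)(gh)=\sum_{n\ge 0} c_n (gh)^{(n)}$ with $c_n=(-i)^n G^{(n)}(0)/n!$, expand $(gh)^{(n)}=\sum_{k\le n}\binom{n}{k}g^{(n-k)}h^{(k)}$ by Leibniz, and interchange the two summations. This identifies $a_k(x)=\sum_{j\ge 0}c_{j+k}\binom{j+k}{k}g^{(j)}(x)$; since $\frac{1}{(j+k)!}\binom{j+k}{k}=\frac{1}{j!\,k!}$, this is the clean formula $a_k=\frac{(-i)^k}{k!}\,G^{(k)}(-D)g$, whence $a_k^{(r)}=\frac{(-i)^k}{k!}\,G^{(k)}(-D)g^{(r)}=\frac{(-i)^k}{k!}\sum_{j\ge 0}(-i)^j\frac{G^{(j+k)}(0)}{j!}\,g^{(j+r)}$ because $G^{(k)}(-D)$ commutes with differentiation. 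The interchange of the double sum and the term-by-term differentiation are legitimate because $g,h\in\mathcal E_\omega$ and the $c_n$ decay like $e^{-m\varphi_\omega^\ast(n/m)}$; I would record this as an absolute-convergence remark, which simultaneously justifies the representation $G(-D)(gh)=\sum_k a_k h^{(k)}$ as an identity in $\mathcal E_\omega$.

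Next comes the estimate. From the Lemma, $\binom{j+k}{k}|c_{j+k}|\le \binom{j+k}{k}e^{m}e^{-m\varphi_\omega^\ast(\frac{j+k}{m})}$, and from $g\in\mathcal B_{L_\infty,\omega}$ one has $|g^{(j+r)}(x)|\le \|g\|_{2\ell,\infty,\omega}\,e^{2\ell\varphi_\omega^\ast(\frac{j+r}{2\ell})}$. The aim is to bound the resulting $j$-series by $C_\ell\,e^{-m_0\varphi_\omega^\ast(k/m_0)+\ell\varphi_\omega^\ast(r/\ell)}$, and I would separate the three indices with three facts about $\varphi_\omega^\ast$. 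First, inequality \eqref{Young_1} applied to the index $j+k$ gives $m\varphi_\omega^\ast(\frac{j+k}{m})\ge 3m\varphi_\omega^\ast(\frac{j+k}{3m})+(j+k)$, and the linear term $e^{-(j+k)}$ absorbs the combinatorial factor $\binom{j+k}{k}\le 2^{j+k}$, leaving $e^{-(j+k)(1-\log 2)}$. Second, superadditivity $\varphi_\omega^\ast(a+b)\ge \varphi_\omega^\ast(a)+\varphi_\omega^\ast(b)$ (valid because $\varphi_\omega$ is convex and $\varphi_\omega^\ast(0)=-\omega(1)\le 0$) applied at scale $3m$ splits $3m\varphi_\omega^\ast(\frac{j+k}{3m})$ into $3m\varphi_\omega^\ast(\frac{k}{3m})+3m\varphi_\omega^\ast(\frac{j}{3m})$, which fixes $m_0:=3m$ and produces the factor $e^{-m_0\varphi_\omega^\ast(k/m_0)}$ together with a $j$-decay $e^{-3m\varphi_\omega^\ast(j/(3m))}$. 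Third, the midpoint convexity split $2\ell\varphi_\omega^\ast(\frac{j+r}{2\ell})\le \ell\varphi_\omega^\ast(\frac{j}{\ell})+\ell\varphi_\omega^\ast(\frac{r}{\ell})$ extracts the desired factor $e^{\ell\varphi_\omega^\ast(r/\ell)}$ (this is where the $2$ in $\|g\|_{2\ell,\infty,\omega}$ is forced). What then remains under the sum is a factor depending on $j$ alone, and $C_\ell$ will be $\|g\|_{2\ell,\infty,\omega}$ times the value of this residual series.

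The hard part is exactly the convergence and $(k,r)$-uniformity of this residual $j$-series. After the extractions one is left with $\sum_j e^{-3m\varphi_\omega^\ast(j/(3m))}\,e^{\ell\varphi_\omega^\ast(j/\ell)}\,e^{-j(1-\log 2)}$, i.e. a competition between the operator's decay (at scale $3m$) and the growth of the interior derivatives of $g$ (at scale $\ell$). Because $\lambda\mapsto \lambda\varphi_\omega^\ast(s/\lambda)$ is decreasing, this sum is controlled once $\ell$ is large relative to $m$, which is the regime that matters for concluding $a_k\in\mathcal B_{L_\infty,\omega}$; the small-$\ell$ bounds then follow from the large-$\ell$ ones by the same monotonicity, since $\lambda\mapsto\lambda\varphi_\omega^\ast(r/\lambda)$ decreasing makes the target $e^{\ell\varphi_\omega^\ast(r/\ell)}$ only larger as $\ell$ decreases. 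I expect the balancing of these two scales — and the resulting need to track that the constant absorbs only a convergent $j$-sum and the single norm $\|g\|_{2\ell,\infty,\omega}$, uniformly in $k$ and $r$ — to be the main technical obstacle; the algebraic reindexing and the justification of the interchanges are routine by comparison.
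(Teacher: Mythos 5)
Your proposal is correct and follows essentially the same route as the paper's proof: the same Leibniz expansion and reindexing to $a_k=\sum_j\binom{j+k}{k}b_{j+k}\,g^{(j)}$, the same absolute-convergence justification, and the same separation of the indices $j,k,r$ via superadditivity and midpoint convexity of $\varphi_\omega^\ast$ together with inequality (\ref{Young_1}), arriving at the same $m_0=3m$. The only differences are cosmetic: the paper applies superadditivity at scale $m$ first and reserves (\ref{Young_1}) for the $j$- and $k$-factors separately, whereas you apply it to the combined index $j+k$ before splitting, and your identification $a_k=\frac{(-i)^k}{k!}G^{(k)}(-D)g$ is a pleasant extra that the paper does not use.
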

\begin{proof} We denote $b_n = (-i)^n \frac{G^{(n)}(0)}{n!}$ and observe that 
	$$
	\begin{array}{*2{>{\displaystyle}l}}
	G(-D)(gh)(x) & = \sum_{n=0}^\infty b_n (gh)^{(n)}(x) = \sum_{n=0}^\infty b_n\left(\sum_{k=0}^n \binom{n}{k}h^{(k)}(x)g^{(n-k)}(x)\right)\\ & \\ & = \sum_{k=0}^\infty a_k(x) h^{(k)}(x),
	\end{array}$$ where
	$$
	a_k(x) = \sum_{n\geq k}\binom{n}{k} b_n g^{(n-k)}(x) = \sum_{j=0}^\infty \binom{k+j}{k} b_{k+j} g^{(j)}(x).$$ In order to justify the change in the order of summation in the expression for $G(-D)(gh)(x)$ we will check that, for every compact set $K\subset {\mathbb R},$
    \begin{equation}\label{eq:change-order}
    \sup_{x\in K}\sum_{n=0}^\infty |b_n|\sum_{k=0}^n \binom{n}{k}|h^{(k)}(x)|\cdot |g^{(n-k)}(x)| < \infty.\end{equation} In fact, since 
    $$
    \sup_{x\in {\mathbb R}}|g^{(j)}(x)| \leq \|g\|_{3m,\infty,\omega}\exp(3m\varphi^\ast_\omega(\frac{j}{3m})
    $$ and there exist $C_K>0$ such that  
    $$
    \sup_{x\in K}|h^{(j)}(x)|\leq C_K\exp(3m\varphi^\ast_\omega(\frac{j}{3m})
    $$ for every $j\in {\mathbb N}_0,$ we have that the supremum in (\ref{eq:change-order}) is less than or equal to a constant times 
    $$
    \begin{array}{*1{>{\displaystyle}l}}
     \sum_{n=0}^\infty \sum_{k=0}^n \binom{n}{k} \exp\left(-m\varphi^\ast_\omega(\frac{n}{m}) + 3m \varphi^\ast_\omega(\frac{k}{3m}) + 3m \varphi^\ast_\omega(\frac{n-k}{3m})\right) \\  \\  \leq\sum_{n=0}^\infty \sum_{k=0}^n \binom{n}{k}\exp\left(-m\varphi^\ast_\omega(\frac{n}{m}) + 3m \varphi^\ast_\omega(\frac{n}{3m})\right)  \leq \sum_{n=0}^\infty \sum_{k=0}^n \binom{n}{k}e^{-n} < \infty.
    \end{array} 
    $$
 To finish the proof, we estimate the derivatives of the functions $a_k(x).$  We fix $r\in {\mathbb N}_0$ and observe that, for every $\ell\in {\mathbb N},$
	$$
	\begin{array}{*2{>{\displaystyle}l}}
		\left|g^{(j+r)}(x)\right| & \leq \|g\|_{2\ell,\infty,\omega} \exp\left(2\ell \varphi^\ast_\omega(\frac{j+r}{2\ell})\right) \\ & \\ & \leq \|g\|_{2\ell,\infty,\omega}\exp\left(\ell\varphi^\ast_\omega(\frac{j}{\ell}) + \ell\varphi^\ast_\omega(\frac{r}{\ell})\right).
		\end{array}$$ Since
		$$
		\begin{array}{*2{>{\displaystyle}l}}
		|b_{k+j}| & \leq C\exp\left(-m\varphi^\ast_\omega(\frac{k+j}{m})\right) \\ & \\ & \leq C \exp\left(-m\varphi^\ast_\omega(\frac{j}{m}) - m \varphi^\ast_\omega(\frac{k}{m})\right)
		\end{array}$$ we conclude that
		$$
		\sum_{j=0}^\infty \binom{k+j}{k} |b_{k+j}| |g^{(r+j)}(x)|$$ is less than or equal to
		$$
		\|g\|_{2\ell,\infty,\omega}2^k \exp\left(\ell\varphi^\ast_\omega(\frac{r}{\ell}) - m \varphi^\ast_\omega(\frac{k}{m})\right)\sum_{j=0}^\infty 2^j\exp\left(-m\varphi^\ast_\omega(\frac{j}{m})+\ell\varphi^\ast_\omega(\frac{j}{\ell}\right).$$
		For every $\ell \geq 3m$ we have
		$$
		j + \ell\varphi^\ast_\omega(\frac{j}{\ell}) \leq  m\varphi^\ast_\omega(\frac{j}{m}).$$ Consequently
		$$
		\begin{array}{*1{>{\displaystyle}l}}
		\sum_{j=0}^\infty 2^j\exp\left(-m\varphi^\ast_\omega(\frac{j}{m})+\ell\varphi^\ast_\omega(\frac{j}{\ell}\right) \\ \\ =
		\sum_{j=0}^\infty \left(\frac{2}{e}\right)^j \exp\left(j -m\varphi^\ast_\omega(\frac{j}{m})+\ell\varphi^\ast_\omega(\frac{j}{\ell}\right) \leq \sum_{j=0}^\infty \left(\frac{2}{e}\right)^j =:D
		\end{array}$$ and we obtain
		$$
		|a_k^{(r)}(x)| \leq 2^k D \|g\|_{2\ell,\infty,\omega}\exp\left(-m \varphi^\ast_\omega(\frac{k}{m}) + \ell\varphi^\ast_\omega(\frac{r}{\ell})\right).$$ The conclusion follows with $m_0 = 3m$. 
\end{proof} 

\begin{lem}\label{lem:sigma-derivadas} Let $\sigma(x,y) = e^{i\phi(x)y}$ be given, where $\phi '\in {\mathcal B}_{L_\infty, \omega}({\mathbb R}).$ For every $\ell\in{\mathbb N}$ there exists $m_\ell > 0$ such that
$$
\left|\left(\partial_x^n\sigma\right)(x,y)\right| \leq 4^n e^{m_\ell\omega(y)} e^{\ell\varphi^\ast_\omega(\frac{n}{\ell})}\ \ \forall n\in {\mathbb N}.$$
\end{lem}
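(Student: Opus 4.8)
The plan is to exploit that $|\sigma(x,y)|=1$ (since $\phi$ is real-valued and $y\in\mathbb R$), so that the whole estimate reduces to bounding the polynomial in $y$ produced by differentiating the exponential. Writing $u(x)=i\phi(x)y$ and applying the Fa\`a di Bruno formula (equivalently, iterating $\partial_x\sigma=iy\,\phi'(x)\,\sigma$ together with the Leibniz rule), I would write
$$
\partial_x^n\sigma=\sigma\sum_{r=1}^n(iy)^r\!\!\sum_{\substack{\text{partitions of }\{1,\dots,n\}\\\text{into }r\text{ blocks}}}\ \prod_{B}\phi^{(|B|)}(x).
$$
Since $\phi^{(j)}=(\phi')^{(j-1)}$ and $\phi'\in\mathcal B_{L_\infty,\omega}(\mathbb R)$, the problem becomes one of controlling, for each number of blocks $r$ (which is exactly the power of $y$), a sum of products of derivatives of $\phi'$.

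I would fix a large index $m_0$, eventually chosen as a power of $3$ times $\ell$, and use $|\phi^{(j)}(x)|=|(\phi')^{(j-1)}(x)|\le C\exp\!\big(m_0\varphi^*_\omega(\tfrac{j-1}{m_0})\big)$, abbreviating $M_k:=\exp\!\big(m_0\varphi^*_\omega(\tfrac{k}{m_0})\big)$ and $C:=\|\phi'\|_{m_0,\infty,\omega}$. The sequence $(M_k)$ is log-convex because $\varphi^*_\omega$ is convex, and $M_0\le1$ because $\varphi^*_\omega(0)=-\omega(1)\le0$; log-convexity gives $\prod_B M_{|B|-1}\le M_{n-r}$ for each $r$-block partition. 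The delicate point is that one may \emph{not} simply bound the inner sum by this pointwise maximum times the number $S(n,r)$ of partitions: that crude estimate produces a Bell-number factor $\sim e^{n\log n}$ which the target cannot absorb. Instead the crux is the sharper
$$
\sum_{\substack{r\text{-block partitions}}}\ \prod_B|\phi^{(|B|)}(x)|\ \lesssim\ D^{\,n}\binom{n}{r}C^{\,r}M_{n-r},
$$
the composition-stability estimate for $\mathcal B_{L_\infty,\omega}$. I expect this to be the main obstacle. It is exactly here that subadditivity $(\alpha)$ of $\omega$, together with the super-analytic growth of $(M_k)$ forced by $(\gamma)$, is used: the underlying convolution concentrates on the most unbalanced partitions, which yields the binomial coefficient $\binom{n}{r}$ rather than the Stirling number $S(n,r)$. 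I would prove it by bounding the coefficients of the generating function $\exp\!\big(C|y|A(x)\big)$, $A(x)=\sum_{j\ge1}\tfrac{M_{j-1}}{j!}x^j$, via an optimized Cauchy-type estimate, or by a direct combinatorial concentration argument based on $M_jM_{j'}\le M_0M_{j+j'}$.

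Granting this estimate, the rest is routine bookkeeping. For $|y|\ge1$ I would convert each power of $y$ by Young's inequality $\tfrac{r}{m_0}\log|y|\le\varphi_\omega(\log|y|)+\varphi^*_\omega(\tfrac{r}{m_0})$, i.e.
$$
|y|^r\le e^{m_0\omega(y)}\exp\!\big(m_0\varphi^*_\omega(\tfrac{r}{m_0})\big),
$$
the case $|y|<1$ being trivial, and then merge the two $\varphi^*_\omega$-contributions by superadditivity,
$$
\exp\!\big(m_0\varphi^*_\omega(\tfrac{r}{m_0})\big)\exp\!\big(m_0\varphi^*_\omega(\tfrac{n-r}{m_0})\big)\le\exp\!\big(m_0\varphi^*_\omega(\tfrac{n}{m_0})\big).
$$
Summing $\sum_r\binom{n}{r}C^r=(1+C)^n$ leaves a bound of the shape $(\text{const})^n\,e^{m_0\omega(y)}\exp\!\big(m_0\varphi^*_\omega(\tfrac{n}{m_0})\big)$.

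Finally I would descend from the index $m_0$ to $\ell$. Iterating \eqref{Young_1} gives, for $m_0=3^{q}\ell$, the cushion $q\,n+m_0\varphi^*_\omega(\tfrac{n}{m_0})\le\ell\varphi^*_\omega(\tfrac{n}{\ell})$; choosing $q$ so large that $e^{q}$ dominates all accumulated geometric factors ($D$, $1+C$, and so on), this linear-exponential gain absorbs those constants and produces the clean exponent $\ell\varphi^*_\omega(\tfrac{n}{\ell})$, the remaining base being bounded by $4^n$. Setting $m_\ell:=m_0$ then yields $|\partial_x^n\sigma(x,y)|\le 4^n e^{m_\ell\omega(y)}\exp\!\big(\ell\varphi^*_\omega(\tfrac{n}{\ell})\big)$. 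Besides the composition estimate, the one thing to watch is that the coefficient of $\omega(y)$ must remain a constant $m_\ell$ independent of $n$; this forces one to keep the powers of $y$ explicit and to convert them only once, at the very end, rather than step by step along the recursion.
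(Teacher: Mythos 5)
Your skeleton (Fa\`a di Bruno, convert the powers of $y$ by Legendre duality into $e^{m\omega(y)}e^{m\varphi^*_\omega(r/m)}$, merge the two $\varphi^*_\omega$-contributions by superadditivity) matches the paper's, and you correctly see that bounding the inner sum by $S(n,r)$ times the maximal product is too lossy. But there are two genuine gaps. First, the ``composition-stability estimate'' $\sum_{r\text{-block partitions}}\prod_B|\phi^{(|B|)}(x)|\lesssim D^n\binom{n}{r}C^rM_{n-r}$ is the entire analytic content of the lemma, and neither of your proposed proofs works as stated: the generating function $\exp(C|y|A(x))$ with $A(x)=\sum_{j\geq1}\frac{M_{j-1}}{j!}x^j$ has radius of convergence $0$ (since $\omega(t)=o(t)$ one gets $\varphi^*_\omega(s)\geq s\log s+Ks$ for every $K$ and large $s$, so $M_{j-1}/j!$ outgrows every geometric sequence), hence there is no circle on which to run a Cauchy estimate; and log-convexity $M_jM_{j'}\leq M_0M_{j+j'}$ only controls the product over the blocks of a single partition, which is exactly the ``pointwise maximum times $S(n,r)$'' bound you rightly reject. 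The mechanism that actually replaces the Stirling count by a binomial one is that the Fa\`a di Bruno weight $\frac{n!}{\prod_j(j!)^{k_j}k_j!}$ carries $\prod_j(j!)^{k_j}$ in its denominator, and the inequality $\prod_j\bigl(e^{\ell\varphi^*_\omega(\frac{j-1}{\ell})}/j!\bigr)^{k_j}\leq B_\ell^{k}\,e^{\ell\varphi^*_\omega(\frac{n-k}{\ell})}/(n-k)!$ --- which the paper imports from \cite[p.\ 403]{superposition} --- trades those factorials against the superadditivity defect of $\varphi^*_\omega$; what survives is $\binom{n}{k}\sum_{\bm k}\frac{k!}{k_1!\cdots k_n!}\leq 2^n\cdot 2^{n-1}$, a count of compositions of $n$ rather than of set partitions. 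This inequality must be proved or cited; it is not a formal consequence of log-convexity.

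Second, your closing absorption step is circular. You set $C=\|\phi'\|_{m_0,\infty,\omega}$ with $m_0=3^q\ell$, arrive at a factor $(1+C)^n$, and want to kill it with the cushion $e^{qn}$ from iterating \eqref{Young_1} by ``choosing $q$ so large that $e^q$ dominates $1+C$'' --- but $C$ is the seminorm at level $3^q\ell$ and grows with $q$ with no a priori control, so such a $q$ need not exist. The paper avoids both the level descent and the circularity by never detaching the norm constant from the powers of $|y|$: each block contributes one factor of $B_\ell$ \emph{and} one factor of $|y|$, so one bounds $(B_\ell|y|)^k e^{-\ell\varphi^*_\omega(k/\ell)}\leq e^{\ell\omega(B_\ell y)}$ directly and then uses subadditivity, $\omega(B_\ell y)\leq\lceil B_\ell\rceil\omega(y)$; the $\phi'$-dependent constant thus lands in the coefficient $m_\ell=\ell\lceil B_\ell\rceil$ of $\omega(y)$, where the statement allows it, rather than in the geometric base in $n$. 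Your own final remark (keep the powers of $y$ explicit and convert them only once) is the right instinct --- apply it to $C|y|$ rather than to $|y|$ alone, and the whole argument runs at the single level $\ell$ with no descent needed.
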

\begin{proof} We fix $\ell, n\in {\mathbb N}$ and denote $\displaystyle I:=\Big\{{\bm k} = (k_1, k_2, \ldots, k_n): \sum_{j=1}^n j k_j = n\Big\}.$ For every $\bm k\in I$ we put $k = \sum_{j=1}^n k_j.$ 
\par\medskip 
Since $$\|\phi'\|_{\ell,\infty,\omega} = \sup_{x\in {\mathbb R}}\sup_{k\in {\mathbb N}}|\phi^{(k)}(x)|e^{-\ell\varphi^\ast_\omega(\frac{k}{\ell})} < \infty$$ then we get, from Faà di Bruno formula, that there exist a constant $B_\ell > 0$ such that
$$
\begin{array}{*2{>{\displaystyle}l}}
\left|\left(\partial_x^n\sigma\right)(x,y)\right| & \leq \sum_{\bm k\in I}\frac{n!}{k_1!\ldots k_n!} |y|^k \prod_{j=1}^{n}\left|\frac{\phi^{(j)}(x)}{j!}\right|^{k_j}\\ & \\ & \overset{\text{\cite[p. 403]{superposition}}}{\leq}
\sum_{\bm k\in I}\frac{n!}{k_1!\ldots k_n!} |y|^k B_\ell^k \frac{\exp\left(\ell\varphi^\ast_\omega(\frac{n-k}{\ell})\right)}{(n-k)!}\\ & \\ & \leq 2^n \sum_{\bm k\in I}\frac{k!}{k_1!\ldots k_n!}\left(B_\ell |y|\right)^k\exp\left(\ell\varphi^\ast_\omega\left(\frac{n}{\ell}\right)-\ell\varphi^\ast_\omega\left(\frac{k}{\ell}\right)\right).
\end{array}$$ 
Using
$$
\left(B_\ell |y|\right)^k\exp\left(-\ell\varphi^\ast_\omega\left(\frac{k}{\ell}\right)\right) \leq \exp\left(\ell\omega(B_\ell y)\right)$$ we finally conclude
$$
\begin{array}{*2{>{\displaystyle}l}}
\left|\left(\partial_x^n\sigma\right)(x,y)\right| & \leq 2^n\exp\left(\ell\omega(B_\ell y)+\ell\varphi^\ast_\omega(\frac{n}{\ell})\right)\sum_{\bm k\in I}\frac{k!}{k_1!\ldots k_n!} \\ & \\ & = 2^{2n-1} e^{\ell\omega(B_\ell y)} e^{\ell\varphi^\ast_\omega(\frac{n}{\ell})}.
\end{array}$$ To finish the proof it suffices to take $m_\ell = \ell\lceil B_\ell\rceil.$	
\end{proof}

In the proof of Lemma \ref{lem:sigma-derivadas} we used the fact that
	
		$$\sum_{{\bm k}\in I}\frac{k!}{k_1!\ldots k_n!} = 2^{n-1},$$ as follows, for instance, after taking $h(x) = \frac{x}{1-x}$ and evaluating $(h\circ h)^{(n)}(0)$ with Faà di Bruno's formula.

\begin{lem}\label{lem:sigma-mod-2-previo} Let $\phi' \in {\mathcal B}_{L_\infty, \omega}({\mathbb R})$ be given, and let $\sigma(x,y) = e^{i\phi(x)y}.$ For every $N\in {\mathbb N}$ there is $k\in {\mathbb N}$ such that  
$$
\left\{e^{-k\omega(y)}\sigma(\cdot,y):\ y\in {\mathbb R}\right\}
$$ is a bounded set in $M^\infty_m({\mathbb R}),$ where $m(z_1,\zeta_1) = e^{N\omega(\zeta_1)}.$
\end{lem}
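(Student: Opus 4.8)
The plan is to extract the frequency weight $e^{N\omega(\zeta_1)}$ out of the short-time Fourier transform by means of a strongly elliptic ultradifferential operator. I fix the window $\varphi\in {\mathcal S}_\omega(\mathbb{R})$ with $\|\varphi\|_2=1$, so that
$$
V_\varphi(\sigma(\cdot,y))(z_1,\zeta_1)=\int_\mathbb{R}\sigma(x,y)\,\overline{\varphi(x-z_1)}\,e^{-2\pi i x\zeta_1}\,dx .
$$
Using \eqref{eq:2} I would choose $G\in H(\mathbb{C})$ with $\log|G(z)|=O(\omega(z))$ and $\log|G(x)|\geq N\omega(x)$ for all $x\in\mathbb{R}$; then $|G(2\pi\zeta_1)|\geq e^{N\omega(2\pi\zeta_1)}\geq e^{N\omega(\zeta_1)}$. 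Since $G(D_x)e^{-2\pi i x\zeta_1}=G(2\pi\zeta_1)e^{-2\pi i x\zeta_1}$ and $\sigma(\cdot,y)\overline{\varphi(\cdot-z_1)}\in{\mathcal S}_\omega(\mathbb{R})$, the transposition identity for $G(D)$ recorded after Definition \ref{def:G(D)} gives
$$
G(2\pi\zeta_1)\,V_\varphi(\sigma(\cdot,y))(z_1,\zeta_1)=\int_\mathbb{R}\Big(G(-D)\big[\sigma(\cdot,y)\,\overline{\varphi(\cdot-z_1)}\big]\Big)(x)\,e^{-2\pi i x\zeta_1}\,dx ,
$$
whence $e^{N\omega(\zeta_1)}\,|V_\varphi(\sigma(\cdot,y))(z_1,\zeta_1)|\leq \int_\mathbb{R}\big|G(-D)[\sigma(\cdot,y)\overline{\varphi(\cdot-z_1)}](x)\big|\,dx$. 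The problem is thereby reduced to bounding this $L^1$-norm uniformly in $(z_1,\zeta_1)$, allowing only an $e^{k\omega(y)}$-type dependence on $y$.

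I would next verify that, for each fixed $y$, one has $\sigma(\cdot,y)\in{\mathcal B}_{L_\infty,\omega}(\mathbb{R})$ with seminorms controlled by $e^{m_\ell\omega(y)}$. Starting from $|\partial_x^n\sigma(x,y)|\leq 4^n e^{m_\ell\omega(y)}e^{\ell\varphi_\omega^*(n/\ell)}$ of Lemma \ref{lem:sigma-derivadas}, the factor $4^n$ is absorbed by passing to a larger index: iterating \eqref{Young_1} (with $m=2\ell$ and then $m=6\ell$) yields $18\ell\,\varphi_\omega^*(n/18\ell)+2n\leq 2\ell\,\varphi_\omega^*(n/2\ell)$, and since $\ln 4<2$ this gives $\|\sigma(\cdot,y)\|_{2\ell,\infty,\omega}\leq e^{m_{18\ell}\omega(y)}$. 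With this in hand I apply Proposition \ref{lem:G(D)-product} to $gh$ with $g=\sigma(\cdot,y)$ and $h=\overline{\varphi(\cdot-z_1)}\in{\mathcal S}_\omega\subset{\mathcal E}_\omega$, obtaining $G(-D)(gh)=\sum_k a_k h^{(k)}$ with (taking $r=0$) $\sup_x|a_k(x)|\leq C\,e^{-m_0\varphi_\omega^*(k/m_0)}$, where $m_0$ depends only on $G$ and $C$ depends only on $\|\sigma(\cdot,y)\|_{2,\infty,\omega}\leq e^{m_{18}\omega(y)}$.

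Finally, since $h^{(k)}(x)=\overline{\varphi^{(k)}(x-z_1)}$ and $\varphi\in{\mathcal S}_\omega(\mathbb{R})$, its defining seminorms together with condition $(\gamma)$ (which makes $e^{-\mu\omega}$ integrable) give $\int_\mathbb{R}|h^{(k)}(x)|\,dx=\|\varphi^{(k)}\|_1\leq C_\lambda e^{\lambda\varphi_\omega^*(k/\lambda)}$ for every $\lambda>0$, independently of $z_1$. Therefore
$$
\int_\mathbb{R}\big|G(-D)(gh)(x)\big|\,dx\leq\sum_{k}\|a_k\|_\infty\,\|\varphi^{(k)}\|_1\leq C\,C_\lambda\,e^{m_{18}\omega(y)}\sum_k e^{\lambda\varphi_\omega^*(k/\lambda)-m_0\varphi_\omega^*(k/m_0)},
$$
and choosing $\lambda=3m_0$ makes the exponent at most $-k$ by \eqref{Young_1}, so the series converges. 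Combining with the first step, $e^{N\omega(\zeta_1)}|V_\varphi(\sigma(\cdot,y))(z_1,\zeta_1)|\leq C'e^{m_{18}\omega(y)}$ uniformly in $(z_1,\zeta_1)$, that is $\|\sigma(\cdot,y)\|_{M^\infty_m}\leq C'e^{m_{18}\omega(y)}$; taking any integer $k\geq m_{18}$ then shows that $\{e^{-k\omega(y)}\sigma(\cdot,y):y\in\mathbb{R}\}$ is bounded in $M^\infty_m(\mathbb{R})$.

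The conceptual crux is the realization that the frequency weight $e^{N\omega(\zeta_1)}$ is generated by a strongly elliptic $G(D)$ and transferred onto $x$-derivatives of $\sigma$; the technical heart is then keeping the resulting infinite series under control uniformly in $z_1,\zeta_1$. This is exactly where the two preparatory results feed in: Lemma \ref{lem:sigma-derivadas} must be upgraded (absorbing its $4^n$ factor through \eqref{Young_1}) so that $\sigma(\cdot,y)$ genuinely lies in ${\mathcal B}_{L_\infty,\omega}$ with $y$-dependence only via $e^{m_\ell\omega(y)}$, and the product expansion of Proposition \ref{lem:G(D)-product}, summed against $\|\varphi^{(k)}\|_1$, must be made independent of $(z_1,\zeta_1)$. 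A minor point requiring care is the justification of the transposition identity for the infinite-order operator $G(D)$ under the integral, since neither factor is compactly supported; this is routine because $\sigma(\cdot,y)\overline{\varphi(\cdot-z_1)}\in{\mathcal S}_\omega(\mathbb{R})$ makes every expression absolutely convergent.
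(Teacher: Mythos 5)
Your argument is correct and follows essentially the same route as the paper: a strongly elliptic $G(D)$ with $\log|G|\geq N\omega$ on ${\mathbb R}$ is transposed onto the product $\sigma(\cdot,y)\varphi(\cdot-z_1)$, Proposition \ref{lem:G(D)-product} expands the result as $\sum_k a_k h^{(k)}$, and Lemma \ref{lem:sigma-derivadas} together with \eqref{Young_1} controls the series uniformly in $(z_1,\zeta_1)$ with only an $e^{m_\ell\omega(y)}$ dependence. The paper makes two minor simplifications relative to your write-up: it takes $\varphi\in{\mathcal D}_\omega([-1,1])$, so that compact support replaces your $\|\varphi^{(k)}\|_1$ estimates and the transposition identity applies verbatim, and it places $\varphi(\cdot-z_1)$ rather than $\sigma(\cdot,y)$ in the ${\mathcal B}_{L_\infty,\omega}$ slot of the proposition (letting the derivatives fall on $\sigma$), which makes your preliminary upgrade of Lemma \ref{lem:sigma-derivadas} via \eqref{Young_1} unnecessary.
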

\begin{proof}
 We fix $\varphi\in {\mathcal D}_{\omega}([-1,1])$ and consider $G(D)$ as in (\ref{eq:G}) and (\ref{eq:2}). Then
$$
\begin{array}{*1{>{\displaystyle}l}}
G(2\pi \zeta_1)\left(V_{\varphi}\sigma(\cdot,y)\right)(z_1,\zeta_1) \\ \\ = \int_{{|x-z_1|\leq 1}} G(-D_x)\left(\sigma(x,y)\varphi(x-z_1)\right)e^{-2\pi i x\zeta_1 }\, dx.\end{array}$$ Since ${\mathcal E}_\omega({\mathbb R})$ is holomorphically closed (see for instance \cite{pv}) we have $\sigma(\cdot,y)\in {\mathcal E}_\omega({\mathbb R})$ and we can apply Proposition \ref{lem:G(D)-product} with $g(x) = \varphi(x-z_1)$ to get
$$
G(-D_x)\left(\sigma(x,y)\varphi(x-z_1)\right) = \sum_{k=0}^\infty a_k(x,z_1)\partial_x^k\left(\sigma(x,y)\right)$$ where, for some constants $C>0$ and $m_0\in {\mathbb N},$
$$
|a_k(x,z_1)| \leq C e^{-m_0\varphi^\ast_\omega(\frac{k}{m_0})}\ \ \forall k\in {\mathbb N}_0, x, z_1\in {\mathbb R}.$$   
 From Lemma \ref{lem:sigma-derivadas}, for every $\ell\in {\mathbb N}$ we have
$$
\left|G(-D_x)\left(\sigma(x,y)\varphi(x-z_1)\right)\right| \leq C \sum_{k=0}^\infty e^{-m_0\varphi^\ast_\omega(\frac{k}{m_0})} 4^k e^{m_\ell\omega(y)+\ell\varphi^\ast_\omega(\frac{k}{\ell})}.$$ We now fix $\ell \geq 9 m_0.$  From
$$
e^{-m_0\varphi^\ast_\omega(\frac{k}{m_0})} \leq e^{-2k} e^{-\ell\varphi^\ast_\omega(\frac{k}{\ell})}$$ we get
$$
\left|G(-D_x)\left(\sigma(x,y)\varphi(x-z_1)\right)\right| \leq C \sum_{k=0}^\infty \left(\frac{4}{e^2}\right)^k e^{m_\ell\omega(y)} =: De^{m_\ell\omega(y)}.$$ Consequently,
$$
e^{N\omega(\zeta_1)}\left|\left(V_{\varphi}\sigma(\cdot,y)\right)(z_1,\zeta_1)\right| \leq  2D e^{m_\ell\omega(y)}.
$$ The conclusion follows with $k=m_\ell.$	
\end{proof}

\begin{prop}\label{lem:sigma-mod-2} Let $\phi' \in {\mathcal B}_{L_\infty, \omega}({\mathbb R})$ be given, and let $\sigma(x,y) = e^{i\phi(x)y}.$ For every $N\in {\mathbb N}$ there is $k\in {\mathbb N}$ such that  $\sigma\in M^\infty_m({\mathbb R}^2),$ where $m(z,\zeta) = e^{N\omega(\zeta_1)-k\omega(z_2)}.$
\end{prop}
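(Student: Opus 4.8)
The plan is to deduce the two-variable conclusion from the one-variable estimate of Lemma~\ref{lem:sigma-mod-2-previo} by exploiting that a tensor-product window factorizes the short-time Fourier transform. Since $M^\infty_m({\mathbb R}^2)$ is independent of the window, I would compute $V_\Phi\sigma$ with $\Phi=\varphi\otimes\varphi$, where $\varphi\in{\mathcal D}_\omega([-1,1])$ is the same compactly supported window used in the previous lemma (and $\varphi\otimes\varphi\in{\mathcal D}_\omega({\mathbb R}^2)\subset{\mathcal S}_\omega({\mathbb R}^2)$ is an admissible window). First note that $\sigma$ is a bounded continuous function, hence $\sigma\in{\mathcal S}'_\omega({\mathbb R}^2)$ and its STFT is well defined. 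Integrating in $x$ before $y$ and applying Fubini gives
$$V_{\varphi\otimes\varphi}\sigma(z,\zeta) = \int_{\mathbb R}\left(V_\varphi\sigma(\cdot,y)\right)(z_1,\zeta_1)\,\overline{\varphi(y-z_2)}\,e^{-2\pi i y\zeta_2}\,dy,$$
so the inner factor is exactly the one-variable STFT in the $x$-variable with $y$ frozen.

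Next I would insert the bound from Lemma~\ref{lem:sigma-mod-2-previo}: given $N$, there are $k\in{\mathbb N}$ and $C>0$ with $e^{N\omega(\zeta_1)}\left|\left(V_\varphi\sigma(\cdot,y)\right)(z_1,\zeta_1)\right|\leq C e^{k\omega(y)}$ for all $y,z_1,\zeta_1$. Multiplying the displayed identity by $e^{N\omega(\zeta_1)}$ and taking absolute values yields
$$e^{N\omega(\zeta_1)}\left|V_{\varphi\otimes\varphi}\sigma(z,\zeta)\right| \leq C\int_{\mathbb R} e^{k\omega(y)}\,|\varphi(y-z_2)|\,dy.$$

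The remaining point is to absorb the $y$-integral into the factor $e^{k\omega(z_2)}$. Because $\varphi$ is supported in $[-1,1]$, the integrand vanishes unless $|y-z_2|\leq 1$, and there subadditivity gives $\omega(y)\leq\omega(z_2)+\omega(y-z_2)\leq\omega(z_2)+\omega(1)$. Hence $e^{k\omega(y)}\leq e^{k\omega(1)}e^{k\omega(z_2)}$ on the support, and the integral is bounded by $e^{k\omega(1)}\|\varphi\|_1\,e^{k\omega(z_2)}$. This produces
$$\sup_{z,\zeta} e^{N\omega(\zeta_1)-k\omega(z_2)}\left|V_{\varphi\otimes\varphi}\sigma(z,\zeta)\right| < \infty,$$
which is precisely $\sigma\in M^\infty_m({\mathbb R}^2)$ for $m(z,\zeta)=e^{N\omega(\zeta_1)-k\omega(z_2)}$, with the same $k$ furnished by Lemma~\ref{lem:sigma-mod-2-previo}. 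The only genuinely delicate input is the one-variable estimate already established; the two-variable passage is routine, the single step worth care being the compact-support localization in $y$ together with subadditivity, which converts the unavoidable loss $e^{k\omega(y)}$ into the admissible weight $e^{k\omega(z_2)}$.
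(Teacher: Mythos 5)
Your proposal is correct and follows essentially the same route as the paper: factorizing $V_{\varphi\otimes\varphi}\sigma$ through the one-variable STFT with $y$ frozen, invoking Lemma~\ref{lem:sigma-mod-2-previo}, and then using the compact support of $\varphi$ together with subadditivity of $\omega$ to convert $e^{k\omega(y)}$ into $e^{k\omega(1)}e^{k\omega(z_2)}$. The only difference is that you spell out the justification (tempered ultradistribution status of $\sigma$, Fubini, window independence) that the paper leaves implicit.
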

\begin{proof}
We fix $\varphi\in {\mathcal D}_{\omega}([-1,1]).$ It follows from the definitions that 
$$
\left(V_{\varphi\otimes\varphi}\sigma\right)(z,\zeta) = \int_{{\mathbb R}}\varphi(y-z_2)e^{-2\pi i y\zeta_2}\left(V_\varphi\sigma(\cdot,y)\right)(z_1,\zeta_1)\, dy.
$$ From Lemma \ref{lem:sigma-mod-2-previo} there is a constant $C>0$ such that 
$$
\begin{array}{*2{>{\displaystyle}l}}
e^{N\omega(\zeta_1)}\left|\left(V_{\varphi\otimes\varphi}\sigma\right)(z,\zeta)\right|& \leq C\int_{|y-z_2|\leq 1}e^{k\omega(y)}|\varphi(y-z_2)|\, dy\\ & \\ & \leq Ce^{k\omega(1)}\int_{{\mathbb R}}|\varphi(y)|\, dy\cdot e^{k\omega(z_2)}.
\end{array} 
$$
\end{proof}

\begin{prop}\label{lem:sigma-mod-1} Let $\phi:{\mathbb R}^d\to {\mathbb R}^d$ be a smooth function such that for some $b\in [0,1),$ $|\phi(x)|=O(|x|^b)$ as $|x| \to \infty$  and let $\sigma(x,y) = e^{i\phi(x)y}.$ For every $N\in {\mathbb N}$ there is $k\in {\mathbb N}$ such that  $\sigma\in M^\infty_m({\mathbb R}^2),$ where $m(z,\zeta) = e^{N\omega(\zeta_2)-k\omega(|z_1|^b)}.$
\end{prop}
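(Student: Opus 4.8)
The plan is to follow the scheme of the proof of Lemma~\ref{lem:sigma}(2), replacing the polynomial decay available in the tempered setting by the $\omega$-exponential decay proper to the Gelfand--Shilov framework; throughout I read the statement in one variable, so that $z=(z_1,z_2),\ \zeta=(\zeta_1,\zeta_2)\in{\mathbb R}^2$ and $\phi:{\mathbb R}\to{\mathbb R}$. As window on ${\mathbb R}^2$ I fix $\varphi\otimes\varphi$ with $\varphi\in{\mathcal D}_{\omega}([-1,1])$, so that $\sigma\in M^\infty_m({\mathbb R}^2)$ is equivalent to the bound $|V_{\varphi\otimes\varphi}\sigma(z,\zeta)|\lesssim e^{-N\omega(\zeta_2)+k\omega(|z_1|^b)}$ for a suitable $k=k(N)$. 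First I integrate in $y$ exactly as in Lemma~\ref{lem:sigma}(2): the inner integral produces $\widehat\varphi$, giving
$$
V_{\varphi\otimes\varphi}\sigma(z,\zeta)=\int_{\mathbb R}\varphi(x-z_1)e^{-2\pi i x\zeta_1}e^{-2\pi i z_2(\zeta_2-\phi(x))}\widehat\varphi(\zeta_2-\phi(x))\,dx,
$$
whence, since $\varphi$ is supported in $[-1,1]$,
$$
\left|V_{\varphi\otimes\varphi}\sigma(z,\zeta)\right|\leq\int_{|x-z_1|\leq 1}|\varphi(x-z_1)|\,|\widehat\varphi(\zeta_2-\phi(x))|\,dx.
$$

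The key point is to extract the decay in $\zeta_2$ from $\widehat\varphi$. Because $\varphi\in{\mathcal D}_{\omega}([-1,1])\subset{\mathcal S}_{\omega}({\mathbb R})$ and the Gelfand--Shilov class ${\mathcal S}_{\omega}({\mathbb R})$ is invariant under the Fourier transform, we have $\widehat\varphi\in{\mathcal S}_{\omega}({\mathbb R})$; taking $\alpha=0$ in its defining seminorms gives $|\widehat\varphi(\xi)|\leq C_N\,e^{-N\omega(\xi)}$ for the prescribed $N$. (Alternatively, one may produce this decay with a strongly elliptic $G(D)$ as in the proof of Lemma~\ref{lem:sigma-mod-2-previo}.) I then use the subadditivity $(\alpha)$ of $\omega$ to split $\omega(\zeta_2)\leq\omega(\zeta_2-\phi(x))+\omega(\phi(x))$, so that
$$
|\widehat\varphi(\zeta_2-\phi(x))|\leq C_N\,e^{-N\omega(\zeta_2-\phi(x))}\leq C_N\,e^{-N\omega(\zeta_2)}\,e^{N\omega(\phi(x))}.
$$

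It remains to bound $\omega(\phi(x))$ on the support $|x-z_1|\leq 1$. From $|\phi(x)|=O(|x|^b)$ together with the smoothness (hence local boundedness) of $\phi$ one has $|\phi(x)|\leq C_0(1+|x|)^b\leq C_1(1+|z_1|)^b$ there; using subadditivity again through the elementary inequalities $\omega(C_1 t)\leq\lceil C_1\rceil\,\omega(t)$ and $(1+|z_1|)^b\leq 1+|z_1|^b$ (valid since $0\leq b<1$) I obtain $\omega(\phi(x))\leq\lceil C_1\rceil\big(\omega(1)+\omega(|z_1|^b)\big)$. Choosing $k=N\lceil C_1\rceil$ and absorbing $e^{N\lceil C_1\rceil\omega(1)}\|\varphi\|_1$ into the implicit constant yields $|V_{\varphi\otimes\varphi}\sigma(z,\zeta)|\lesssim e^{-N\omega(\zeta_2)+k\omega(|z_1|^b)}$, which is exactly $\sigma\in M^\infty_m({\mathbb R}^2)$.

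I expect the only genuinely delicate step to be this last chain of weight manipulations: unlike the tempered case, where Peetre's inequality handles $(1+|\zeta_2-\phi(x)|)^{-N}$ directly, here one must verify that subadditivity of $\omega$ combined with the sublinearity $(1+t)^b\le 1+t^b$ converts the local growth $|\phi(x)|\lesssim(1+|z_1|)^b$ into a clean majorant of the form $k\,\omega(|z_1|^b)$ with constants independent of $z$ and $\zeta$. It is precisely the hypothesis $b<1$ that makes $(1+t)^b\le 1+t^b$ available and keeps the $z_1$-weight in the asserted shape.
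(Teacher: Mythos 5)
Your proof is correct and follows essentially the same route as the paper, which simply remarks that the computation of Lemma \ref{lem:sigma}(2) goes through once the window is taken in ${\mathcal D}_\omega$ so that $\widehat\varphi$ decays like $e^{-N\omega}$ by the Paley--Wiener theorem. Your write-up supplies exactly the weight manipulations (subadditivity of $\omega$, $\omega(Ct)\leq\lceil C\rceil\omega(t)$, and $(1+t)^b\leq 1+t^b$) that the paper leaves implicit.
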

\begin{proof}
 In fact, the proof of Lemma \ref{lem:sigma} (2) works by taking $\varphi \in {\mathcal D}_\omega({\mathbb R}^d)$ and using Paley-Wiener's theorem \cite[Proposition 3.4]{BMT_90}. 
 \end{proof} 

 \begin{rem}{\rm 
 In case $b\in (0,1)$ and $\omega (t^b)=o(\omega(t))$ as $t\to \infty$, we get $\sigma\in M^\infty_{m}({\mathbb R}^{2d})$ where 
	$$m(z,\zeta) = e^{N\omega(\zeta_2)-c\omega(z_1)}$$ and $c>0$ is arbitrary, and if $b=0,$ then $\sigma\in M^\infty_{m}({\mathbb R}^{2d})$ where $m(z,\zeta)=e^{N\omega(\zeta_2)}. $
}
\end{rem} 

For every $s>0$ and $k > 0$ we denote
	$$
	v_s(z) = e^{s\left(\omega(z_1)+\omega(z_2)\right)},\ \ m_{s',k}(z) = v_{s'}(z)e^{k\omega(z_2)},\ \ z = (z_1, z_2).$$
    
\begin{thm}\label{th:main-onevariable} \begin{enumerate} \item Let $\phi '\in {\mathcal B}_{L_\infty, \omega}({\mathbb R})$ be given, and let $\sigma(x,y) = e^{i\phi(x)y}.$ We assume that for some $a\in[0,1)$ the following hold: $|\phi(x)|=O(|x|^a)$ and $\omega(x^a)=o(\omega(x))$ as $x\to \infty.$ Then for every $0<s<s'$ there exists $k>0$ such that $$\sigma(x,D):M^p_{m_{s',k}}({\mathbb R})\to M^p_{v_s} ({\mathbb R}) $$ for every $1\leq p \leq \infty.$

\item Given $\psi(x)=x+\phi(x),$  with  $\phi $ satisfying the hypothesis in (1). Then, for every $0<s<s'$ there exists $k>0,$ such that for each $1\leq p \leq \infty$ the composition operator $C_\psi$ acts continuously between $M^p_{m_{s',k}}({\mathbb R}) $ and $M^p_{v_s}({\mathbb R}).$
\end{enumerate}
\end{thm}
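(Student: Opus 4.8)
The plan is to reduce part (2) to part (1) and to establish part (1) by adapting the scheme of Theorem~\ref{cont_psdos}, replacing the polynomial weights there by the exponential weights $v_s$ and $m_{s',k}$ and feeding in the short-time Fourier transform estimates for $\sigma$ provided by Proposition~\ref{lem:sigma-mod-2} and Proposition~\ref{lem:sigma-mod-1}. For (2) it suffices to observe that $C_\psi=\sigma(x,D)$ with symbol $\sigma(x,y)=\exp\bigl(i\,2\pi\phi(x)y\bigr)$; since $2\pi\phi$ satisfies exactly the hypotheses imposed on $\phi$ in (1), part (2) is an immediate consequence once (1) is proved, the Kohn--Nirenberg representation and the pairing formula being valid in the ultradistributional setting as noted above. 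For (1), by complex interpolation it is enough to treat $p=1$ and $p=\infty$, and throughout I would use $\langle\sigma(x,D)f,g\rangle=\langle V_{\Phi_0}\sigma,V_{\Phi_0}R(g,f)\rangle$ together with the factorization of $V_{\Phi_0}R(g,f)$ recorded before Lemma~\ref{lem:sigma}.

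For $p=1$ and $f,g\in{\mathcal S}_\omega({\mathbb R})$ I would start from
$$|\langle\sigma(x,D)f,g\rangle|\leq\int_{{\mathbb R}^4}|V_{\Phi_0}\sigma(z,\zeta)|\,|V_\varphi g(z_1,z_2+\zeta_1)|\,|V_\varphi f(z_1+\zeta_2,z_2)|\,d(z,\zeta),$$
bound $|V_\varphi g(z_1,z_2+\zeta_1)|\leq\|g\|_{M^\infty_{1/v_s}}\,v_s(z_1,z_2+\zeta_1)$ and write $V_\varphi f=F/m_{s',k}$ with $F=(V_\varphi f)\,m_{s',k}$, so that $\int_{{\mathbb R}^2}|F|=\|f\|_{M^1_{m_{s',k}}}$. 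The decisive computation is the weight ratio: using subadditivity of $\omega$ through $\omega(z_2+\zeta_1)\leq\omega(z_2)+\omega(\zeta_1)$ and $\omega(z_1+\zeta_2)\geq\omega(z_1)-\omega(\zeta_2)$, one finds
$$\frac{v_s(z_1,z_2+\zeta_1)}{m_{s',k}(z_1+\zeta_2,z_2)}\leq e^{(s-s')\omega(z_1)}\,e^{(s-s'-k)\omega(z_2)}\,e^{s\omega(\zeta_1)}\,e^{s'\omega(\zeta_2)}.$$
I would then insert the two estimates $|V_{\Phi_0}\sigma(z,\zeta)|\lesssim e^{-N_1\omega(\zeta_1)+k_1\omega(z_2)}$ from Proposition~\ref{lem:sigma-mod-2} and $|V_{\Phi_0}\sigma(z,\zeta)|\lesssim e^{-N_2\omega(\zeta_2)+c\omega(z_1)}$ from the remark following Proposition~\ref{lem:sigma-mod-1}, where the hypothesis $\omega(x^a)=o(\omega(x))$ makes $c>0$ arbitrarily small. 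Choosing $c<s'-s$ and then $k\geq k_1+s-s'$ renders every $z$-dependent exponential at most $1$, so the integrand is dominated uniformly in $z$ by $G(\zeta)\,|F(z_1+\zeta_2,z_2)|$ with
$$G(\zeta)=\min\bigl(e^{(s-N_1)\omega(\zeta_1)}e^{s'\omega(\zeta_2)},\,e^{s\omega(\zeta_1)}e^{(s'-N_2)\omega(\zeta_2)}\bigr).$$

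It then remains to verify $\int_{{\mathbb R}^2}G(\zeta)\,d\zeta<\infty$, after which Fubini and the translation invariance of $\int|F|$ yield $|\langle\sigma(x,D)f,g\rangle|\lesssim\|f\|_{M^1_{m_{s',k}}}\|g\|_{M^\infty_{1/v_s}}$. As in Theorem~\ref{cont_psdos} I would split into $\{|\zeta_1|\leq|\zeta_2|\}$ and $\{|\zeta_2|\leq|\zeta_1|\}$: on the first I use the second branch of the minimum and $\omega(\zeta_1)\leq\omega(\zeta_2)$ to bound $G(\zeta)\lesssim e^{(s+s'-N_2)\omega(\zeta_2)}$, and symmetrically on the second; taking $N_1,N_2>s+s'$ and invoking property $(\gamma)$, which forces $e^{-\varepsilon\omega(t)}$ to decay faster than any power of $t$ and so absorbs the polynomial factor from integrating the complementary variable, gives convergence. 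This proves continuity for $p=1$ via the duality $M^1_{v_s}=(M^0_{1/v_s})'$. The case $p=\infty$ is parallel: pull out $\|f\|_{M^\infty_{m_{s',k}}}$, set $F=v_s^{-1}V_\varphi g$ so that $\int|F|=\|g\|_{M^1_{1/v_s}}$, obtain the same $G(\zeta)$, conclude $\sigma(x,D):M^0_{m_{s',k}}\to M^0_{v_s}$ by density together with $\sigma(x,D)({\mathcal S}_\omega)\subset M^1_{v_s}\subset M^0_{v_s}$, and take the bi-transpose for the $M^\infty\to M^\infty$ statement; interpolation then covers $1<p<\infty$.

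The step I expect to be the main obstacle is the simultaneous bookkeeping of all these exponential weights: a single choice of $N_1,N_2,c,k$ must make every $z$-exponential bounded—so that the $z$-integration merely reproduces a modulation-space norm of $f$ or $g$—while leaving a genuinely integrable $\zeta$-function $G$. The crucial structural input is the hypothesis $\omega(x^a)=o(\omega(x))$: it is exactly what lets the decay $e^{(s-s')\omega(z_1)}$ coming from the weights absorb the $z_1$-growth in the estimate for $V_{\Phi_0}\sigma$, which a priori is only of order $e^{k\omega(|z_1|^a)}$. Without it the two competing exponentials in $z_1$ cannot be balanced, and the argument breaks down.
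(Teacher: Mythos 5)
Your proposal is correct and follows essentially the same route as the paper: interpolation to reduce to $p=1$ and $p=\infty$, the pairing $\langle\sigma(x,D)f,g\rangle=\langle V_{\Phi_0}\sigma,V_{\Phi_0}R(g,f)\rangle$, the two STFT estimates for $\sigma$ from Propositions~\ref{lem:sigma-mod-2} and \ref{lem:sigma-mod-1} combined into a $\min$, the weight-ratio computation via subadditivity/moderateness, integrability of $G(\zeta)$ by splitting on $|\zeta_1|\lessgtr|\zeta_2|$ together with property $(\gamma)$, and the duality/bi-transpose conclusion. Your explicit bookkeeping with $N_1,N_2,c,k$ and the identified role of $\omega(x^a)=o(\omega(x))$ in absorbing the $z_1$-growth match the paper's argument, which uses a single $N>2s'$.
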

\begin{proof} 
(1)  We fix $N > 2s'$ and take $k$ as in Lemma \ref{lem:sigma-mod-2}. For brevity, we denote $m(z) = m_{s',k}(z).$ By interpolation (see \cite[Proposition 1.11]{TPT_25}) it suffices to prove the result for $p= 1$ and $p=\infty.$
 \par\medskip 
We first consider the case $p = 1.$ 
\par\medskip 
Let $\varphi\in {\mathcal S}_{\omega}({\mathbb R})$ a non-zero window and consider $\Phi_0 = R(\varphi, \varphi).$ For any $f,g\in {\mathcal S}_{\omega}({\mathbb R})$ we have
$$
\begin{array}{*2{>{\displaystyle}l}}
	\langle \sigma(x,D)f, g\rangle & = \int_{{\mathbb R}^{2}}\sigma(x,y)\overline{R(g,f)}(x,y)\, d(x,y) \\ & \\ & = \int_{{\mathbb R}^{4}}	\left(V_{\Phi_0}\sigma\right)(z,\zeta) \overline{V_{\Phi_0}(R(g,f))}(z,\zeta)\, d(z,\zeta).
\end{array}$$
Then $\left|\langle \sigma(x,D)f, g\rangle\right|$ is less than or equal to a constant times 
$$
\begin{array}{*1{>{\displaystyle}l}}
\int_{{\mathbb R}^{4}}	\left|\left(V_{\Phi_0}\sigma\right)(z,\zeta)\right|\cdot \left|V_\varphi g(z_1, z_2+\zeta_1)\right| \cdot\left|V_\varphi f(z_1 + \zeta_2, z_2)\right|\, d(z,\zeta)\\  \\ \leq \|g\|_{M^\infty_{v_{-s}}} \int_{{\mathbb R}^{4}}\left|\left(V_{\Phi_0}\sigma\right)(z,\zeta)\right| e^{-k\omega(z_2)} v_s(z_1, z_2+\zeta_1) \frac{\left|F(z_1+\zeta_2, z_2)\right|}{v_{s'}(z_1+\zeta_2, z_2)}\, d(z, \zeta)\\
\end{array}$$ where 
$$
F(a,b) = m(a,b) V_\varphi f (a,b).
$$
It turns out that
$$
\int_{{\mathbb R}^{2}}\left|F(z_1+\zeta_2, z_2)\right|\, dz = \int_{{\mathbb R}^{2}}\left|F(z_1, z_2)\right|\, dz = \|f\|_{M^1_{m}}$$ is independent of $\zeta_2.$ Moreover, having in mind that $v_s$ and $v_{s'}^{-1}$ are respectively $v_s$ and $v_{s'}$-moderate we have that
$$
\left|\left(V_{\Phi_0}\sigma\right)(z,\zeta)\right| e^{-k\omega(z_2)} v_{s'}^{-1}(z_1+\zeta_2, z_2)v_s(z_1, z_2 + \zeta_1)$$ is less than or equal to some constant times
$$
\left|\left(V_{\Phi_0}\sigma\right)(z,\zeta)\right| e^{-k\omega(z_2)}e^{(s-s')\omega(z_1)} v_{s'}(\zeta_2,0) v_s(0, \zeta_1).$$ Now  apply Propositions \ref{lem:sigma-mod-1} and \ref{lem:sigma-mod-2}  and the fact that $\omega(x^a)=o(\omega(x))$ as $x\to \infty$ to conclude
$$
\left|\left(V_{\Phi_0}\sigma\right)(z,\zeta)\right| e^{-k\omega(z_2)}e^{(s-s')\omega(z_1)}\lesssim \min\left(e^{-N\omega(\zeta_1)}, e^{-N\omega(\zeta_2)} \right).$$ Hence
$$
\left|\langle \sigma(x,D)f, g\rangle\right|	\lesssim \|f\|_{M^1_{m}} \|g\|_{M^\infty_{1/v_s}} \int_{{\mathbb R}^{2}} G(\zeta)\ d\zeta$$ where
$$
G(\zeta) = \min\left(e^{-N\omega(\zeta_1)}, e^{-N\omega(\zeta_2)} \right)v_{s'}(\zeta_2,0) v_s(0, \zeta_1).
$$ This means that $\sigma(x,D)f$ defines a continuous linear form on $M^0_{1/v_s},$ that is, $\sigma(x,D)f\in M^1_{v_s}$ and $\|\sigma(x,D)f\|_{M^1_{v_s}}\lesssim \|f\|_{M^1_m}$ for every $f\in {\mathcal S}_\omega({\mathbb R}).$ Thus $\sigma(x,D): {\mathcal S}_\omega({\mathbb R})\to  {\mathcal S}^\prime_\omega({\mathbb R})$ can be extended as a bounded operator 
$$
\sigma(x,D):M^1_{m_{s',k}}({\mathbb R}) \to M^1_{v_s}({\mathbb R}).
$$
\par\medskip 
Regarding the case $p=\infty, $ for $f,g\in {\mathcal S}_\omega({\mathbb R}^d)$ we have 
$$
\begin{array}{*1{>{\displaystyle}l}}
\left|\langle \sigma(x,D)f, g\rangle\right|	\lesssim \|f\|_{M^\infty_{m}}\times \\ \\ \int_{{\mathbb R}^{4}}\left|\left(V_{\Phi_0}\sigma\right)(z,\zeta)\right| e^{-k\omega(z_2)} v_{s'}^{-1}(z_1+\zeta_2, z_2) \left|V_\varphi g(z_1, z_2+\zeta_1)\right|\, d(z, \zeta)\\  \\  \leq \|f\|_{M^\infty_{m}}\times \\ \\ \int_{{\mathbb R}^{4}}\left|\left(V_{\Phi_0}\sigma\right)(z,\zeta)\right| e^{-k\omega(z_2)} \frac{v_s(z_1, z_2 + \zeta_1)}{v_{s'}(z_1+\zeta_2, z_2)} |F(z_1, z_2+\zeta_1)| \, d(z,\zeta)
\end{array}$$ where
$$
F(a,b) = v_s^{-1}(a,b) V_\varphi g (a,b).$$ It turns out that
$$
\int_{{\mathbb R}^{2}}\left|F(z_1, z_2+\zeta_1)\right|\, dz = \int_{{\mathbb R}^{2}}\left|F(z_1, z_2)\right|\, dz = \|g\|_{M^1_{1/v_s}}$$ is independent of $\zeta_1.$ As in the case $p=1,$ 
$$
\left|\left(V_{\Phi_0}\sigma\right)(z,\zeta)\right| e^{-k\omega(z_2)} v_{s'}^{-1}(z_1+\zeta_2, z_2)v_s(z_1, z_2 + \zeta_1)\leq G(\zeta)$$ where 
$$
\int_{{\mathbb R}^2} G(\zeta)\, d\zeta < \infty.$$ Consequently 
$$
\left|\langle \sigma(x,D)f, g\rangle\right|	\lesssim \|f\|_{M^\infty_{m}} \|g\|_{M^1_{1/v_s}}
$$ for every $f,g\in {\mathcal S}_\omega({\mathbb R}).$	Since we already know that $\sigma(x,D)({\mathcal S}_\omega({\mathbb R}))$ is contained in $M^1_{v_s}({\mathbb R})\subset  M^0_{v_s}({\mathbb R}),$ this implies that $\sigma(x,D)$ extends to a bounded operator $$\sigma(x,D):M^0_m({\mathbb R}) \to M^0_{v_s}({\mathbb R}).$$ Taking the bi-transpose map, $$\sigma(x,D):M^\infty_m({\mathbb R}) \to M^\infty_{v_s}({\mathbb R})$$ is continuous.

(2) follows directly from (1). 
\end{proof}

As a consequence of the previous result, we obtain an improvement of \cite[4.3]{jmaa}:

\begin{cor} Let $\psi(x) = Ax + \phi(x)$  for some $A\neq 0,$ $\phi '\in {\mathcal B}_{L_\infty, \omega}({\mathbb R})$ and $|\phi(x)|=O(|x|^a)$  for some $a \in [0,1).$ We assume that  $\omega(x^a)=o(\omega(x))$ as $x\to \infty$. Then the composition operator $$C_\psi: {\mathcal S}_{\omega}({\mathbb R})\to {\mathcal S}_{\omega}({\mathbb R})$$ is well defined and continuous.
 \end{cor}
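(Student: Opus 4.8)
The plan is to reduce the corollary to Theorem \ref{th:main-onevariable}(2) by exploiting the scale of weights $v_s$ and the characterization \eqref{Gelfand-Shilov} of ${\mathcal S}_\omega({\mathbb R})$ as an intersection of weighted ultramodulation spaces. First I would dispose of the matrix $A$: since $A\neq 0$ is a nonzero scalar (in one variable), the dilation $f\mapsto f(A\,\cdot\,)$ maps ${\mathcal S}_\omega({\mathbb R})$ continuously onto itself and permutes the spaces $M^p_{v_s}({\mathbb R})$ up to equivalent norms, so I may absorb it and assume $\psi(x)=x+\phi(x)$, exactly the form treated in Theorem \ref{th:main-onevariable}(2).

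The heart of the argument is then purely formal. By \eqref{Gelfand-Shilov} we have, with $v_s(z)=e^{s(\omega(z_1)+\omega(z_2))}$ (note this restricts to $\nu_s$ on the relevant diagonal, and the intersection over all $s>0$ of the corresponding $M^\infty_{v_s}$ still recovers ${\mathcal S}_\omega$), the identity
$$
{\mathcal S}_\omega({\mathbb R}) = \bigcap_{s>0} M^\infty_{v_s}({\mathbb R}).
$$
Continuity of $C_\psi$ into ${\mathcal S}_\omega({\mathbb R})$ is equivalent to continuity into each $M^\infty_{v_s}({\mathbb R})$, $s>0$. So fix $s>0$; choose any $s'>s$. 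Theorem \ref{th:main-onevariable}(2), applied with $p=\infty$, produces $k>0$ such that $C_\psi:M^\infty_{m_{s',k}}({\mathbb R})\to M^\infty_{v_s}({\mathbb R})$ is continuous. Since $m_{s',k}(z)=v_{s'}(z)e^{k\omega(z_2)}\geq v_{s'}(z)\geq v_{s''}(z)$ for $s''$ large enough, one has the continuous inclusion ${\mathcal S}_\omega({\mathbb R})=\bigcap_{t>0}M^\infty_{v_t}\hookrightarrow M^\infty_{v_{s''}}\hookrightarrow M^\infty_{m_{s',k}}$, the last step because a larger weight gives a smaller space with a stronger norm. Composing, $C_\psi:{\mathcal S}_\omega({\mathbb R})\to M^\infty_{v_s}({\mathbb R})$ is continuous for every $s>0$, and taking the intersection yields $C_\psi:{\mathcal S}_\omega({\mathbb R})\to{\mathcal S}_\omega({\mathbb R})$.

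The hypotheses $\phi'\in{\mathcal B}_{L_\infty,\omega}({\mathbb R})$, $|\phi(x)|=O(|x|^a)$ and $\omega(x^a)=o(\omega(x))$ are precisely what Theorem \ref{th:main-onevariable}(2) requires, so no new estimates are needed; the corollary is a soft consequence. The only point demanding care — and the step I would watch most closely — is the \emph{uniformity in $s$} of the weight matching: for each target index $s$ the theorem hands back its own $k=k(s)$, but since we only need continuity into each individual $M^\infty_{v_s}$ separately (an intersection of continuous maps is continuous into the intersection, by the universal property of the projective limit topology on ${\mathcal S}_\omega$), no single uniform $k$ is required. I would state this reduction explicitly to make clear that the Fréchet-space topology of ${\mathcal S}_\omega({\mathbb R})$ is the projective limit of the $M^\infty_{v_s}$, so that a family of maps continuous into each factor assembles into one continuous map into the limit. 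With that observation the proof is complete, well-definedness being automatic from the same inclusions.
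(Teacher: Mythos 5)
Your argument is correct and follows exactly the route the paper intends: the corollary is stated there as an immediate consequence of Theorem \ref{th:main-onevariable}(2) together with the identity \eqref{Gelfand-Shilov} and the projective-limit description of the topology of ${\mathcal S}_\omega({\mathbb R})$, which you spell out in full. Two small repairs: the embedding ${\mathcal S}_\omega({\mathbb R})\hookrightarrow M^\infty_{m_{s',k}}$ requires $v_{s''}\gtrsim m_{s',k}$, i.e.\ $s''\geq s'+k$ (your displayed chain of inequalities runs in the wrong direction, though your verbal justification is the right one), and the dilation $f\mapsto f(Ax)$ does \emph{not} permute the individual spaces $M^p_{v_s}$ up to equivalent norms (see the Remark immediately following the corollary in the paper); for the fixed scalar $A$ it only maps $M^\infty_{v_{s\lceil |A|\rceil}}$ continuously into $M^\infty_{v_s}$ by subadditivity of $\omega$, which is still enough to preserve the intersection and hence to justify the reduction to $A=1$.
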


\begin{rem}{\rm  \begin{enumerate}
 \item Let $\omega $ be  a weight  function, then, by property ($\gamma$),  $f(x)=e^{-A'\omega(x)} \in L^2_{e^{A\omega}}({\mathbb R}) ,$ ($A' \, A>0$), if and only if $A'>A.$ If the weight satisfies the 
    (BMM)-condition,  (see \cite[p. 435]{bmm}):
      $$ \exists H>1 \, : \, 2\omega (t)\leq \omega(Ht)+H \mbox{ for every }\, t\geq 0, $$   
      given $A>0$ we cannot find $B>0$ such that $L^2_{e^{B\omega}}({\mathbb R})$ contains all dilations of $L^2_{e^{A\omega}}({\mathbb R}).$  In fact,  since for every $n\in {\mathbb N},$ $2^n\omega(t)\leq \omega(H^nt)+(2^n-1)H$, $f(\frac{t}{H^n})\in L^2_{e^{B\omega}}({\mathbb R}) $ if and only if $2^nB<A'.$  We observe that $M^2_{e^{A\omega}\otimes {\bm 1}}=L^2_{e^{A\omega}}({\mathbb R}).$
    On the other hand, the dilations also do not leave the space $M^2_{{\bm 1}\otimes e^{A\omega}}({\mathbb R})={\mathcal F}(L^2_{e^{A\omega}}({\mathbb R}))$ invariant when $A > 0$ since ${\mathcal F}(g_\lambda)=\lambda^{-1}({\mathcal F}(g))_{1/\lambda}.$ The Gevrey weights, $\omega(t)=t^{1/s},$ $s>1,$ satisfy property (BMM).
    \item If a weight $\omega$ satisfies condition (BMM), then $\omega(x^a)=o(\omega(x))$ as $x\to \infty$ for each $a\in [0,1)$ (see the proof of \cite[3.6]{jmaa}).
\end{enumerate}
}    
\end{rem}

\subsection{Several variables}
We need a variant of Lemma \ref{lem:sigma-derivadas}.

\begin{lem}\label{lem:sigma-derivadas-improved}
Let $\phi = (\phi_1,\ldots,\phi_d):{\mathbb R}\to {\mathbb R}^d$ be given such that $\phi'_j\in {\mathcal B}_{L_\infty,\omega}$ for every $1\leq j\leq d.$ We put $\sigma(x,y) = e^{i\phi(x)y},\ x\in {\mathbb R},\ y\in {\mathbb R}^d.$ Then for every $\ell\in{\mathbb N}$ there exists $m_\ell > 0$ such that
$$
\left|\left(\partial_x^n\sigma\right)(x,y)\right| \leq (4d)^n e^{d m_\ell\omega(y)} e^{\ell\varphi^\ast_\omega(\frac{n}{\ell})}\ \ \forall n\in {\mathbb N}.$$
\end{lem}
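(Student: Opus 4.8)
The plan is to reduce the multivariable estimate to the scalar Lemma \ref{lem:sigma-derivadas} by exploiting the product structure of $\sigma$. Since $\phi(x)y = \sum_{l=1}^d \phi_l(x)y_l$, I would first factor $\sigma(x,y) = \prod_{l=1}^d \sigma_l(x,y)$, where $\sigma_l(x,y) = e^{i\phi_l(x)y_l}$, and apply the Leibniz rule in the single variable $x$:
$$\partial_x^n\sigma(x,y) = \sum_{n_1+\cdots+n_d=n}\frac{n!}{n_1!\cdots n_d!}\prod_{l=1}^d\partial_x^{n_l}\sigma_l(x,y).$$
This turns the problem into controlling each one-variable factor $\partial_x^{n_l}\sigma_l$, which is exactly the content of the already established scalar lemma.

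For each $l$ we have $\phi_l'\in{\mathcal B}_{L_\infty,\omega}$, so Lemma \ref{lem:sigma-derivadas} applied to $\sigma_l$ yields a constant $m_\ell^{(l)}$ with $|\partial_x^{n_l}\sigma_l(x,y)|\leq 4^{n_l}e^{m_\ell^{(l)}\omega(y_l)}e^{\ell\varphi_\omega^\ast(\frac{n_l}{\ell})}$ for $n_l\geq 1$. Setting $m_\ell=\max_{1\leq l\leq d} m_\ell^{(l)}$ and using that $|y_l|\leq|y|$ together with the monotonicity of $\omega$ (so $\omega(y_l)\leq\omega(y)$), this becomes $|\partial_x^{n_l}\sigma_l(x,y)|\leq 4^{n_l}e^{m_\ell\omega(y)}e^{\ell\varphi_\omega^\ast(\frac{n_l}{\ell})}$. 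I would then multiply these bounds only over the set $S=\{l:n_l\geq1\}$, since each factor with $n_l=0$ equals $1$; this gives
$$\prod_{l=1}^d\left|\partial_x^{n_l}\sigma_l(x,y)\right| \leq 4^{n}\, e^{|S|\,m_\ell\omega(y)}\exp\Big(\ell\sum_{l\in S}\varphi_\omega^\ast(\tfrac{n_l}{\ell})\Big),$$
where $\prod_{l\in S}4^{n_l}=4^{n}$ because $n_l=0$ off $S$.

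Two elementary estimates close the argument. First, $|S|\leq d$ and $m_\ell\omega(y)\geq 0$ give $e^{|S|m_\ell\omega(y)}\leq e^{d\,m_\ell\omega(y)}$. Second, the superadditivity $\sum_{l\in S}\varphi_\omega^\ast(\frac{n_l}{\ell})\leq\varphi_\omega^\ast(\frac{n}{\ell})$ controls the exponent of $n$. Thus every term of the Leibniz sum is bounded by $4^{n}e^{d\,m_\ell\omega(y)}e^{\ell\varphi_\omega^\ast(\frac{n}{\ell})}$, and summing the multinomial coefficients through $\sum_{n_1+\cdots+n_d=n}\frac{n!}{n_1!\cdots n_d!}=d^{n}$ produces exactly
$$\left|\left(\partial_x^n\sigma\right)(x,y)\right| \leq (4d)^{n}\, e^{d\,m_\ell\omega(y)}\, e^{\ell\varphi_\omega^\ast(\frac{n}{\ell})},$$
as claimed. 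The constants $(4d)^n$ and $e^{d\,m_\ell\omega(y)}$ are precisely what the multinomial count and the $d$-fold product of scalar bounds deliver.

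The argument is almost entirely bookkeeping; the one point deserving a word of justification is the superadditivity of $\varphi_\omega^\ast$. I would derive it from convexity: since $\varphi_\omega^\ast$ is a supremum of affine functions it is convex, and $\varphi_\omega^\ast(0)=\sup_{t\geq0}(-\varphi_\omega(t))=-\varphi_\omega(0)=-\omega(1)\leq0$; convexity together with $\varphi_\omega^\ast(0)\leq0$ yields $\varphi_\omega^\ast(a)+\varphi_\omega^\ast(b)\leq\varphi_\omega^\ast(a+b)$ for $a,b\geq0$, and iterating over the indices in $S$ gives the general inequality. The only other bit of care, namely the treatment of the zero-order factors, is handled cleanly by restricting the product to $S$, and it is exactly this restriction that keeps the final constant free of spurious factors.
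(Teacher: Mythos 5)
Your proof is correct and follows essentially the same route as the paper: factor $\sigma=\prod_j\sigma_j$, apply the one-variable Leibniz rule, invoke the scalar Lemma \ref{lem:sigma-derivadas} on each factor, combine via $\omega(y_j)\leq\omega(y)$ and the superadditivity of $\varphi_\omega^\ast$, and count the multinomial coefficients to get $d^n$. Your extra care with the factors having $n_l=0$ (restricting the product to $S$, since $\varphi_\omega^\ast(0)\leq 0$ makes the scalar bound unavailable there) is a small refinement the paper elides, but the argument is the same.
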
 
\begin{proof}
We  denote $\sigma_j(x,y) = e^{i\phi_j(x)y_j},\ 1\leq j\leq d.$ Then 
$$
\partial_x^n\sigma(x,y) = \sum_{n_1+\ldots+n_d=n}\binom{n}{n_1,\ldots,n_d}\prod_{j=1}^d\partial_x^{n_j}\sigma_j.$$ From Lemma \ref{lem:sigma-derivadas} we get 
$$
\begin{array}{*2{>{\displaystyle}l}}
\left|\left(\partial_x^n\sigma\right)(x,y)\right| & \leq \sum_{n_1+\ldots+n_d=n}\binom{n}{n_1,\ldots,n_d}\prod_{j=1}^d 4^{n_j}e^{m_\ell \omega(y_j)}e^{\ell\varphi^\ast_\omega(\frac{n_j}{\ell})}\\ & \\ & \leq 4^ne^{d m_\ell\omega(y)} e^{\ell\varphi^\ast_\omega(\frac{n}{\ell})}\sum_{n_1+\ldots+n_d=n}\binom{n}{n_1,\ldots,n_d}.
\end{array}
$$
\end{proof}
 Proceeding as in the proof of Lemma \ref{lem:sigma-mod-2-previo} but applying Lemma \ref{lem:sigma-derivadas-improved} instead of Lemma \ref{lem:sigma-derivadas} we obtain the following. Observe that the constant $k$ only depends on $N,$ the weight $\omega$ and the norms in ${\mathcal B}_{L_\infty,\omega}$ of $\phi'_j,\ 1\leq j\leq d.$

 \begin{lem}\label{lem:sigma-mod-2-previo-improved}
 Under the hypothesis of Lemma \ref{lem:sigma-derivadas-improved}, for every $N\in {\mathbb N}$ there is $k\in {\mathbb N}$ such that  
$$
\left\{e^{-k\omega(y)}\sigma(\cdot,y):\ y\in {\mathbb R}^d\right\}
$$ is a bounded set in $M^\infty_m({\mathbb R}),$ where $m(z_1,\zeta_1) = e^{N\omega(\zeta_1)}.$ 
 \end{lem}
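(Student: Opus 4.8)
The plan is to imitate the proof of Lemma \ref{lem:sigma-mod-2-previo} verbatim, the only genuinely new ingredient being a slightly longer combinatorial estimate to absorb the extra factor $d$ that surfaces in Lemma \ref{lem:sigma-derivadas-improved}. First I would fix a window $\varphi\in\mathcal{D}_\omega([-1,1])$ and, for the prescribed $N$, invoke Braun's theorem together with the strong ellipticity condition (\ref{eq:2}) to choose $G\in H(\mathbb{C})$ with $\log|G(z)|=O(\omega(z))$ and $\log|G(x)|\geq N\omega(x)$ for all $x\in\mathbb{R}$. Since $\sigma(\cdot,y)\varphi(\cdot-z_1)$ is compactly supported in $x$, the identity $G(D_x)e^{-2\pi i x\zeta_1}=G(2\pi\zeta_1)e^{-2\pi i x\zeta_1}$ together with the adjoint formula for $G(-D_x)$ gives, exactly as before,
$$
G(2\pi\zeta_1)\bigl(V_\varphi\sigma(\cdot,y)\bigr)(z_1,\zeta_1)=\int_{|x-z_1|\leq 1}G(-D_x)\bigl(\sigma(x,y)\varphi(x-z_1)\bigr)e^{-2\pi i x\zeta_1}\,dx.
$$

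Next, since $\mathcal{E}_\omega(\mathbb{R})$ is holomorphically closed we have $\sigma(\cdot,y)\in\mathcal{E}_\omega(\mathbb{R})$, so Proposition \ref{lem:G(D)-product} applied with $g(x)=\varphi(x-z_1)$ expands the integrand as $\sum_{k\geq 0}a_k(x,z_1)\partial_x^k\sigma(x,y)$ with $|a_k(x,z_1)|\leq Ce^{-m_0\varphi_\omega^\ast(k/m_0)}$ uniformly in $x,z_1$ for some $m_0\in\mathbb{N}$. Bounding $|\partial_x^k\sigma(x,y)|$ through Lemma \ref{lem:sigma-derivadas-improved} (in place of Lemma \ref{lem:sigma-derivadas}) introduces the factors $(4d)^k$ and $e^{dm_\ell\omega(y)}$ instead of the $4^k$ and $e^{m_\ell\omega(y)}$ of the scalar case.

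The one point requiring care is the convergence of the resulting series $\sum_k (4d)^k e^{-m_0\varphi_\omega^\ast(k/m_0)+\ell\varphi_\omega^\ast(k/\ell)}$, and this is the main (essentially the only) obstacle relative to the one-variable argument. Whereas there a single application of (\ref{Young_1}) (taking $\ell=9m_0$) produced a factor $e^{-2k}$ and hence the convergent series $\sum_k(4/e^2)^k$, here I would iterate (\ref{Young_1}) $p$ times, i.e. set $\ell=3^p m_0$, to obtain $m_0\varphi_\omega^\ast(k/m_0)\geq pk+\ell\varphi_\omega^\ast(k/\ell)$, and I fix $p$ so large that $e^p>4d$. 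Then
$$
(4d)^k e^{-m_0\varphi_\omega^\ast(k/m_0)+\ell\varphi_\omega^\ast(k/\ell)}\leq \bigl(4d\,e^{-p}\bigr)^k,
$$
so the series sums to a finite constant $D$ independent of $y,z_1,\zeta_1$.

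Finally, combining the estimates gives $|G(-D_x)(\sigma(x,y)\varphi(x-z_1))|\leq D\,e^{dm_\ell\omega(y)}$; integrating over the interval $|x-z_1|\leq 1$ of length $2$ and using $|G(2\pi\zeta_1)|\geq e^{N\omega(2\pi\zeta_1)}\geq e^{N\omega(\zeta_1)}$ (as $\omega$ is increasing) yields $e^{N\omega(\zeta_1)}|(V_\varphi\sigma(\cdot,y))(z_1,\zeta_1)|\leq 2D\,e^{dm_\ell\omega(y)}$. Setting $k:=dm_\ell$ and observing that the scalar factor $e^{-k\omega(y)}$ pulls out of the STFT in the $x$ variable, we conclude $\|e^{-k\omega(y)}\sigma(\cdot,y)\|_{M^\infty_m}\leq 2D$ uniformly in $y\in\mathbb{R}^d$, which is exactly the asserted boundedness; note that $k$ depends only on $N$, on $\omega$ through $m_0$ and $p$ (hence on $d$), and on the norms $\|\phi'_j\|$ entering $m_\ell$.
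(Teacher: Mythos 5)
Your proof is correct and follows exactly the route the paper intends: the paper omits the argument, stating only that one proceeds as in Lemma \ref{lem:sigma-mod-2-previo} with Lemma \ref{lem:sigma-derivadas-improved} in place of Lemma \ref{lem:sigma-derivadas}. You have moreover correctly identified and fixed the one detail the paper glosses over, namely that the factor $(4d)^k$ forces one to iterate (\ref{Young_1}) $p$ times with $e^p>4d$ rather than simply taking $\ell\geq 9m_0$ as in the one-variable case.
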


 \begin{lem}\label{lem:sigma-mod-1-several} Let $\phi = (\phi_1, \ldots, \phi_d)$ be given such that $\frac{\partial \phi_i}{\partial x_k}\in {\mathcal B}_{L_\infty,\omega}({\mathbb R}^d)$ for every $1\leq i,j\leq d$ and $$\sigma(x,y) = e^{i\phi(x) y},\ \ x,y\in {\mathbb R}^d.$$ For every $N\in {\mathbb N}$ and $k = 1, \ldots, d$ there is $p > 0$ such that $\sigma\in M^\infty_m({\mathbb R}^{2d})$ where
	$$
	m(z,\zeta) = e^{N\omega(\zeta_k)-p\omega(z_{d+1},\ldots,z_{2d})},\ \ z,\zeta\in {\mathbb R}^{2d}.$$
\end{lem}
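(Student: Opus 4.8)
The plan is to mirror the one–variable argument of Proposition \ref{lem:sigma-mod-2}, replacing the partial short–time Fourier transform in the single $x$–variable by a partial transform in the distinguished coordinate $x_k$ while all the remaining $x$–coordinates are frozen. Throughout write $z=(z',z'')$ and $\zeta=(\zeta',\zeta'')$ with $z',z'',\zeta',\zeta''\in{\mathbb R}^d$, so that $z''=(z_{d+1},\dots,z_{2d})$ is the position variable dual to $y$ and $\zeta_k$ (with $1\leq k\leq d$) is a frequency dual to $x_k$. I fix the window $\Phi(x,y)=\prod_{i=1}^d\varphi(x_i)\prod_{j=1}^d\varphi(y_j)$ with $\varphi\in{\mathcal D}_\omega([-1,1])$.

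First I would freeze the coordinates $(x_i)_{i\neq k}$ and regard $t\mapsto\phi(x_1,\dots,t,\dots,x_d)$ (with $t$ in the $k$–th slot) as a curve ${\mathbb R}\to{\mathbb R}^d$. Its $j$–th component has derivative $t\mapsto\partial_{x_k}\phi_j(x_1,\dots,t,\dots,x_d)$, which is the restriction of $\partial_{x_k}\phi_j\in{\mathcal B}_{L_\infty,\omega}({\mathbb R}^d)$ to a line; since $\partial_t^{\,r}\big(\partial_{x_k}\phi_j\big)=\partial_{x_k}^{\,r+1}\phi_j$, its ${\mathcal B}_{L_\infty,\omega}({\mathbb R})$–seminorms are dominated by those of $\partial_{x_k}\phi_j$ on ${\mathbb R}^d$, \emph{uniformly} in the frozen variables. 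Hence Lemma \ref{lem:sigma-mod-2-previo-improved} applies to this curve, and the uniformity remark preceding it guarantees that the resulting constant is independent of $(x_i)_{i\neq k}$. Consequently, denoting by $V_\varphi^{(x_k)}$ the partial STFT in the single variable $x_k$, for the prescribed $N\in{\mathbb N}$ there is $p\in{\mathbb N}$ such that
$$
e^{N\omega(\zeta_k)}\,\big|V_\varphi^{(x_k)}\sigma(x_{\neq k},y;z_k,\zeta_k)\big|\leq C\,e^{p\omega(y)},
$$
with $C$ independent of $(x_i)_{i\neq k}$, $y$, $z_k$ and $\zeta_k$.

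Next I would compute $V_\Phi\sigma(z,\zeta)$ as an iterated integral, performing the $x_k$–integration first so that the inner integral is exactly $V_\varphi^{(x_k)}\sigma$; this is legitimate by Fubini since $|\sigma|\equiv 1$ and $\varphi$ has compact support. Inserting the bound above and integrating out the remaining variables gives
$$
\big|V_\Phi\sigma(z,\zeta)\big|\leq C\,e^{-N\omega(\zeta_k)}\Big(\prod_{i\neq k}\|\varphi\|_1\Big)\int_{{\mathbb R}^d}e^{p\omega(y)}\prod_{j=1}^d|\varphi(y_j-z''_j)|\,dy .
$$
On the support of $\prod_j\varphi(y_j-z''_j)$ one has $|y-z''|_\infty\leq 1$, so $\omega(y)\leq\omega(z'')+\omega(\sqrt d)$ by monotonicity and subadditivity, whence the last integral is bounded by a constant times $e^{p\omega(z'')}$. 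This yields $e^{N\omega(\zeta_k)-p\omega(z'')}\,|V_\Phi\sigma(z,\zeta)|\leq C'$, that is $\sigma\in M^\infty_m({\mathbb R}^{2d})$ with $m(z,\zeta)=e^{N\omega(\zeta_k)-p\omega(z_{d+1},\dots,z_{2d})}$, as claimed.

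The main obstacle is precisely the uniformity in the frozen coordinates: one must verify that the single constant produced by Lemma \ref{lem:sigma-mod-2-previo-improved} can be chosen independently of $(x_i)_{i\neq k}$. This is exactly what the explicit dependence of the constant in that lemma provides, once one checks that the ${\mathcal B}_{L_\infty,\omega}({\mathbb R})$–seminorms of the frozen curve are controlled by the ${\mathcal B}_{L_\infty,\omega}({\mathbb R}^d)$–seminorms of $\partial_{x_k}\phi_j$. The remaining transfer of the factor $e^{p\omega(y)}$ to $e^{p\omega(z'')}$ is routine, relying only on the compact support of the window and subadditivity of $\omega$.
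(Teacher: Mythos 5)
Your proposal is correct and follows essentially the same route as the paper: the paper also freezes the coordinates $(x_i)_{i\neq k}$, writes $V_\Phi\sigma$ as an iterated integral with the $x_k$--integration performed first so that the inner integral is the one--variable partial STFT of $t\mapsto\sigma(\widetilde{x_k}+te_k,y)$, applies Lemma \ref{lem:sigma-mod-2-previo-improved} using that the family $t\mapsto\partial_{x_k}\phi_j(\widetilde{x_k}+te_k)$ is bounded in ${\mathcal B}_{L_\infty,\omega}({\mathbb R})$ uniformly in the frozen variables, and then absorbs $e^{p\omega(y)}$ into $e^{p\omega(\overline{z_2})}$ via subadditivity of $\omega$. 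The only cosmetic difference is that the paper bounds the $y$--integral by $e^{p\omega(z'')}\bigl(\int\varphi(t)e^{p\omega(t)}\,dt\bigr)^d$ using $\omega(y)\leq\omega(y-z'')+\omega(z'')$ rather than invoking the compact support of $\varphi$, which changes nothing of substance.
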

\begin{proof} For every $x\in {\mathbb R}^d$ and $1\leq k\leq d$ we denote
	$$\widetilde{x_k} = \left(x_1, \ldots, x_{k-1}, 0, x_{k+1},\ldots, x_d\right)$$ and 
    $$
    \sigma_{\widetilde{x_k}}(t,y) = \sigma(\widetilde{x_k}+te_k,y),\ t\in {\mathbb R}, y\in {\mathbb R}^d.$$ Now we consider  $\psi = \overbrace{\varphi\otimes\ldots\otimes\varphi}^{(2d)}$ and observe that $V_\psi\sigma(z,\zeta)$ is given by
     $$
    \begin{array}{*1{>{\displaystyle}l}}
	\int_{{\mathbb R}^{2d-1}}\prod_{j\neq k} \varphi(x_j-z_j)\prod_{j=1}^d \varphi(y_j-z_{j+d}) \ \times \\ \\ \times \exp\left(-2\pi i \left(\displaystyle\sum_{j\neq k}^d \zeta_j x_j + \sum_{j=1}^d \zeta_{j+d}y_j\right)\right)\left(V_\varphi \sigma_{\widetilde{x_k}}(\cdot,y)\right)(z_k,\zeta_k)\, d(\widehat{x_k},y),\end{array} 
$$ where $$
	\widehat{x_k} = \left(x_1, \ldots, x_{k-1}, x_{k+1},\ldots, x_d\right).$$ Since the family of functions
$$
t\mapsto \frac{\partial \phi_j}{\partial x_k}(\widetilde{x_k}+t e_k),\ \ x\in {\mathbb R}^d,\ 1\leq j\leq d, \ 1\leq k\leq d,$$ is bounded in ${\mathcal B}_{L_\infty,\omega}({\mathbb R}),$ we can apply Lemma \ref{lem:sigma-mod-2-previo-improved} to obtain
 $$
\begin{array}{*2{>{\displaystyle}l}}
\left|V_\psi\sigma(z,\zeta)\right| & \lesssim \|\varphi\|_1^{d-1}e^{-N\omega(\zeta_k)}\int_{{\mathbb R}^{d}}\prod_{j=1}^d |\varphi(y_j-z_{j+d})| e^{k\omega(y)}\, dy \\ & \\ & \lesssim e^{-N\omega(\zeta_k)} e^{k\omega(z_{d+1},\ldots,z_{2d})}\left(\int_{{\mathbb R}} \varphi(t)e^{k\omega(t)}\, dt\right)^d.
\end{array}$$
\end{proof}

For $z\in {\mathbb R}^{2d}$ we will denote $\overline{z_1}:=(z_1,\ldots, z_d)$ and $\overline{z_2}:=(z_{d+1},\ldots, z_{2d}).$ 

From Lemma \ref{lem:sigma-mod-1-several} and (the proof of) Lemma \ref{lem:sigma-mod-1} we get the following.

\begin{prop} Let $\phi = (\phi_1, \ldots, \phi_d)$ be given such that $\frac{\partial \phi_i}{\partial x_k}\in {\mathcal B}_{L_\infty,\omega}({\mathbb R}^d)$ for every $1\leq i,j\leq d$ and $$\sigma(x,y) = e^{i\phi(x) y},\ \ x,y\in {\mathbb R}^d.$$ For every $N\in {\mathbb N}$ there exists $p>0$ such that $\sigma\in M^\infty_{m_1}({\mathbb R}^d)\cap M^\infty_{m_2}({\mathbb R}^d)$ where
$$
m_1(z,\zeta) = e^{N\omega(\overline{\zeta_1})-p\omega(\overline{z_2})},\ \ m_2(z,\zeta) = e^{N\omega(\overline{\zeta_2})-p\omega(\overline{z_1})}.$$
\end{prop}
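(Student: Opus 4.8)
The plan is to treat the two weights separately, since $m_1$ encodes decay in the frequency variable $\overline{\zeta_1}$ conjugate to $x$ (with controlled growth in the space variable $\overline{z_2}=y$), whereas $m_2$ encodes decay in $\overline{\zeta_2}$ (conjugate to $y$) with controlled growth in $\overline{z_1}=x$. Because the modulation-space norm is independent of the window, I would fix once and for all a compactly supported window $\psi = \varphi^{\otimes 2d}$ with $\varphi\in\mathcal{D}_\omega([-1,1])$ and estimate $|V_\psi\sigma(z,\zeta)|$ directly; the membership $\sigma\in M^\infty_{m_i}(\mathbb{R}^{2d})$ then amounts to the pointwise bound $|V_\psi\sigma(z,\zeta)|\lesssim m_i(z,\zeta)^{-1}$.

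For $m_1$, the input is Lemma \ref{lem:sigma-mod-1-several}: for each coordinate $k\in\{1,\dots,d\}$ and each $M\in\mathbb{N}$ there is $p_k>0$ with $|V_\psi\sigma(z,\zeta)|\lesssim e^{-M\omega(\zeta_k)+p_k\omega(\overline{z_2})}$. The point is to upgrade decay in a single frequency coordinate to decay in $\omega(\overline{\zeta_1})$. Since $\omega$ is increasing and subadditive, $|\overline{\zeta_1}|\le\sqrt d\,\max_k|\zeta_k|$ together with property $(\alpha)$ gives $\omega(\overline{\zeta_1})\le c\,\max_k\omega(\zeta_k)$ with $c=\lceil\sqrt d\,\rceil$. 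Applying the lemma with $M=cN$ for every $k$, taking $p=\max_k p_k$, and, for a given $\zeta$, evaluating the bound at the coordinate $k_0$ maximizing $\omega(\zeta_k)$ yields $|V_\psi\sigma(z,\zeta)|\lesssim e^{-N\omega(\overline{\zeta_1})+p\omega(\overline{z_2})}$, i.e. $\sigma\in M^\infty_{m_1}(\mathbb{R}^{2d})$.

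For $m_2$, I would run the argument of Lemma \ref{lem:sigma}(2) (the tempered prototype of Lemma \ref{lem:sigma-mod-1}) in the $\omega$-setting and in several variables. The hypothesis $\partial\phi_i/\partial x_k\in\mathcal{B}_{L_\infty,\omega}$ forces the gradient of $\phi$ to be bounded, hence $|\phi(x)|=O(|x|)$; this is precisely the $b=1$ instance, and it is what produces the weight $e^{-p\omega(\overline{z_1})}$ rather than $e^{-p\omega(|\overline{z_1}|^b)}$. Writing, as in Lemma \ref{lem:sigma}(2),
\[
V_\psi\sigma(z,\zeta)=\int_{\mathbb{R}^d}\Big(\prod_{j=1}^d\varphi(x_j-(\overline{z_1})_j)\Big)e^{-2\pi i x\cdot\overline{\zeta_1}}e^{-2\pi i\,\overline{z_2}\cdot(\overline{\zeta_2}-\phi(x))}\,\widehat{\varphi^{\otimes d}}(\overline{\zeta_2}-\phi(x))\,dx,
\]
the Paley–Wiener theorem for $\mathcal{D}_\omega$ bounds $|\widehat{\varphi^{\otimes d}}(\overline{\zeta_2}-\phi(x))|$ by a constant times $e^{-M\omega(\overline{\zeta_2}-\phi(x))}$ for every $M$. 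Combining with the subadditivity estimate $\omega(\overline{\zeta_2})\le\omega(\overline{\zeta_2}-\phi(x))+\omega(\phi(x))$, and with the fact that the compact support of $\varphi$ confines $x$ to $|x-\overline{z_1}|\le\sqrt d$ (so $|\phi(x)|\lesssim 1+|\overline{z_1}|$ and hence $\omega(\phi(x))\lesssim\omega(\overline{z_1})$), gives $|V_\psi\sigma(z,\zeta)|\lesssim e^{-N\omega(\overline{\zeta_2})+p\omega(\overline{z_1})}$ after choosing $M$ large, which is $\sigma\in M^\infty_{m_2}(\mathbb{R}^{2d})$.

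The step requiring the most care is the $m_1$ estimate: Lemma \ref{lem:sigma-mod-1-several} only controls one frequency variable at a time, so the decay in the full vector $\overline{\zeta_1}$ must be reconstructed by selecting, for each frequency, the dominant coordinate and then choosing the growth exponent $p$ and the implicit constants uniformly over the finitely many directions $k$. The subadditivity comparison $\omega(\overline{\zeta_1})\le c\max_k\omega(\zeta_k)$ is what makes this reduction possible; everything else is the bookkeeping of weights already carried out in the one-variable proofs.
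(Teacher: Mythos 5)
Your proof is correct and follows essentially the same route as the paper, which disposes of the statement by citing Lemma \ref{lem:sigma-mod-1-several} for the $m_1$ bound and the proof of Lemma \ref{lem:sigma-mod-1} (i.e.\ the Paley-Wiener argument of Lemma \ref{lem:sigma}(2) run for $\varphi\in{\mathcal D}_\omega$) for the $m_2$ bound. The two points you make explicit that the paper leaves implicit --- assembling decay in the full vector $\overline{\zeta_1}$ from the single-coordinate estimates via $\omega(\overline{\zeta_1})\leq \lceil\sqrt{d}\,\rceil\max_k\omega(\zeta_k)$, and observing that the bounded gradient yields $|\phi(x)|=O(|x|)$ so that the $b=1$ instance of that argument produces the factor $e^{-p\omega(\overline{z_1})}$ --- are both handled correctly.
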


The previous result provides the extension of Theorem \ref{th:main-onevariable} to the multivariable setting, whose proof is omitted.

\begin{thm} Let $\phi = (\phi_1, \ldots, \phi_d)$ be given such that $\frac{\partial \phi_i}{\partial x_j}\in {\mathcal B}_{L_\infty,\omega}({\mathbb R}^d)$ for every $1\leq i,j\leq d$ and $|\phi(x)| = O(|x|^b)$ for some $0\leq b < 1.$ We put $$\sigma(x,y) = e^{i\phi(x) y},\ \ x,y\in {\mathbb R}^d.$$ For every $s, k > 0$ we denote
	$$
v_s(z) = e^{s(\omega(\overline{z_1})+\omega(\overline{z_2}))},\ \ m_{s,k}(z) = v_s(z)e^{k\omega(\overline{z_2})}.$$ Then for every $s\in {\mathbb N}$ (large enough) there exists $k>0$ such that
$$
\sigma(x,D):M^p_{m_{s,k}}({\mathbb{R}}^d) \to M^p_{v_s}({\mathbb{R}}^d).$$

If $\psi(x)=x+\phi(x)$ with $\phi$ as above, then the composition operator $C_\psi$ acts continuously between $M^p_{m_{s,k}}({\mathbb{R}}^d)$ and $M^p_{v_s}({\mathbb{R}}^d).$
\end{thm}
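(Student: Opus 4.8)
The plan is to reduce the statement to the pseudodifferential mapping property $\sigma(x,D)\colon M^p_{m_{s,k}}({\mathbb R}^d)\to M^p_{v_s}({\mathbb R}^d)$ and then read off the composition operator from the identity $f(\psi(x))=\sigma(x,D)f(x)$, valid for $\psi(x)=x+\phi(x)$; once the operator bound is established the continuity of $C_\psi$ is immediate. For the operator bound I would transcribe the proof of Theorem \ref{th:main-onevariable} almost verbatim, the only change being that the one–variable estimates of Propositions \ref{lem:sigma-mod-2} and \ref{lem:sigma-mod-1} are now supplied by the two memberships $\sigma\in M^\infty_{m_1}\cap M^\infty_{m_2}$ of the preceding Proposition. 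As there, interpolation (\cite[Proposition 1.11]{TPT_25}) reduces everything to $p=1$ and $p=\infty$, and $p=\infty$ is recovered from $p=1$ by first producing a bounded operator $M^0_{m_{s,k}}\to M^0_{v_s}$ and then passing to the bitranspose.

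For the estimate itself, in the case $p=1$ I would test $\sigma(x,D)f$ against $g\in{\mathcal S}_\omega({\mathbb R}^d)$, use $\langle\sigma(x,D)f,g\rangle=\langle V_{\Phi_0}\sigma,V_{\Phi_0}R(g,f)\rangle$ with $\Phi_0=R(\varphi,\varphi)$, and invoke the $d$–dimensional Rihaczek identity (\cite[Lemma 1.3.39]{cr}) to bound $|\langle\sigma(x,D)f,g\rangle|$ by
$$
\int_{{\mathbb R}^{4d}}|V_{\Phi_0}\sigma(z,\zeta)|\,|V_\varphi g(\overline{z_1},\overline{z_2}+\overline{\zeta_1})|\,|V_\varphi f(\overline{z_1}+\overline{\zeta_2},\overline{z_2})|\,d(z,\zeta).
$$
Setting $F=m_{s,k}V_\varphi f$ and estimating $|V_\varphi g|\le\|g\|_{M^\infty_{1/v_s}}v_s$, the weights accompanying $V_{\Phi_0}\sigma$ collapse by the moderateness of $v_s$ (that is, by subadditivity of $\omega$): the spatial and frequency factors of $v_s(\overline{z_1},\overline{z_2}+\overline{\zeta_1})/v_s(\overline{z_1}+\overline{\zeta_2},\overline{z_2})$ telescope, leaving a factor $e^{s\omega(\overline{\zeta_1})+s\omega(\overline{\zeta_2})}e^{-k\omega(\overline{z_2})}$. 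Since $\int_{{\mathbb R}^{2d}}|F(\overline{z_1}+\overline{\zeta_2},\overline{z_2})|\,dz=\|f\|_{M^1_{m_{s,k}}}$ is independent of $\overline{\zeta_2}$, it then suffices to dominate $|V_{\Phi_0}\sigma(z,\zeta)|\,e^{s\omega(\overline{\zeta_1})+s\omega(\overline{\zeta_2})}e^{-k\omega(\overline{z_2})}$ by a $z$–independent, $\zeta$–integrable function $G(\zeta)$.

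This is where the two halves of the preceding Proposition enter. The bound $|V_{\Phi_0}\sigma|\lesssim e^{-N\omega(\overline{\zeta_1})+p\omega(\overline{z_2})}$ is used when $\overline{\zeta_1}$ is large; choosing $k\ge p$ absorbs the $\overline{z_2}$–growth and yields decay $e^{(s-N)\omega(\overline{\zeta_1})}$. The bound $|V_{\Phi_0}\sigma|\lesssim e^{-N\omega(\overline{\zeta_2})+p\omega(\overline{z_1})}$ governs large $\overline{\zeta_2}$ and produces $e^{(s-N)\omega(\overline{\zeta_2})}$; taking $N>2s$ then makes $G(\zeta)=\min\!\big(e^{(s-N)\omega(\overline{\zeta_1})},e^{(s-N)\omega(\overline{\zeta_2})}\big)v_s(\overline{\zeta_2},0)v_s(0,\overline{\zeta_1})$ integrable on ${\mathbb R}^{2d}$, exactly as the one–variable $G$ of Theorem \ref{th:main-onevariable}. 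The case $p=\infty$ is the mirror computation, with the roles of $f$ and $g$ interchanged and $F=v_s^{-1}V_\varphi g$.

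The delicate point, and the one I expect to be the main obstacle, is the second of these two estimates. The growth of $V_{\Phi_0}\sigma$ in the \emph{spatial} variable $\overline{z_1}$ is not damped by the target weight $v_s$ (contrary to the $\overline{z_2}$–direction, which is taken care of by the extra factor $e^{k\omega(\overline{z_2})}$ built into $m_{s,k}$), so it must be beaten from the input side. This is precisely the place where the hypothesis $|\phi(x)|=O(|x|^b)$ with $b<1$ is indispensable: as in Proposition \ref{lem:sigma-mod-1} and Lemma \ref{lem:sigma-mod-1-several}, it forces the $\overline{z_1}$–growth of $V_{\Phi_0}\sigma$ to be of the sub-linear type $\omega(|\overline{z_1}|^b)=o(\omega(\overline{z_1}))$, which is then reabsorbed through the comparison of the spatial weights of source and target exactly as in the one–variable argument. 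Once this growth is under control, the remaining steps are the routine bookkeeping of the preceding subsection.
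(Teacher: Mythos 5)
Your overall strategy is exactly the one the paper intends: the paper omits the proof of this theorem, indicating only that it is the extension of Theorem \ref{th:main-onevariable} obtained by feeding the two memberships $\sigma\in M^\infty_{m_1}\cap M^\infty_{m_2}$ from the preceding Proposition into the one–variable argument (interpolation to $p=1,\infty$, duality with $M^0_{1/v_s}$, the Rihaczek identity, moderateness of $v_s$, bitranspose). That part of your write-up is faithful and correct, including the reduction to dominating $|V_{\Phi_0}\sigma(z,\zeta)|\,e^{s\omega(\overline{\zeta_1})+s\omega(\overline{\zeta_2})}e^{-k\omega(\overline{z_2})}$ by a $z$–independent integrable $G(\zeta)$, and the choice $k\geq p$ to absorb the $\overline{z_2}$–growth coming from $m_1$.

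The gap is in the step you yourself single out as ``the main obstacle'': absorbing the spatial growth $e^{p\omega(\overline{z_1})}$ (equivalently $e^{p\omega(|\overline{z_1}|^b)}$) carried by the second symbol estimate $|V_{\Phi_0}\sigma|\lesssim e^{-N\omega(\overline{\zeta_2})+p\omega(\overline{z_1})}$. Your resolution rests on two claims, and both fail. First, $b<1$ does not force $\omega(t^b)=o(\omega(t))$ for a general subadditive weight (take $\omega(t)=\log^2(e+t)$, for which $\omega(t^b)\sim b^2\omega(t)$); that is precisely why Theorem \ref{th:main-onevariable} assumes $\omega(x^a)=o(\omega(x))$ as a separate hypothesis and why the paper's Remark derives it only under the (BMM) condition. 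Second, even granting that condition, the ``comparison of the spatial weights of source and target'' yields nothing here: with source weight $m_{s,k}=v_s e^{k\omega(\overline{z_2})}$ and target $v_s$ carrying the \emph{same} exponent $s$, the factors $e^{s\omega(\overline{z_1})}$ cancel exactly in the telescoping you perform (your own displayed leftover $e^{s\omega(\overline{\zeta_1})+s\omega(\overline{\zeta_2})}e^{-k\omega(\overline{z_2})}$ contains no decay in $\overline{z_1}$), whereas the one–variable proof crucially takes the source weight $m_{s',k}$ with $s'>s$ and uses the surviving factor $e^{(s-s')\omega(z_1)}$ to swallow $e^{p\omega(|z_1|^a)}$. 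As written, your majorant is not $z$–independent on the region where $\overline{\zeta_1}$ stays bounded and $|\overline{z_1}|\to\infty$, and the proof does not close. To repair it, prove the statement in the same form as Theorem \ref{th:main-onevariable}: source weight $m_{s',k}$ with $s'>s$ and the additional hypothesis $\omega(t^b)=o(\omega(t))$ as $t\to\infty$; this loss of weight is harmless for the subsequent Corollary, since ${\mathcal S}_\omega(\mathbb{R}^d)=\bigcap_{s>0}M^\infty_{v_s}(\mathbb{R}^d)$.
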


The next corollary follows immediately from  the preceding result, together with the invariance of ${\mathcal S}_\omega({\mathbb R}^d) $ under  linear changes of coordinates  and identity (\ref{Gelfand-Shilov}).

\begin{cor} Let us assume  that $\psi(x)=Ax+\phi(x),$ $A$ being an invertible $d\times d$ matrix,  $\partial_{x_j}  \phi\in {\mathcal B}_{L_\infty, \omega}(\mathbb{R}^d, \mathbb{R}^d)$ for each $1\leq j \leq d$ and $|\phi(x)| = O(|x|^b)$ as $|x|\to \infty$ for some  $b\in [0,1).$  Then  $$C_\psi:{\mathcal S}_\omega({\mathbb R}^d)\to {\mathcal S}_\omega({\mathbb R}^d) $$ is continuous.
\end{cor}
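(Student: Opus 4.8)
The plan is to deduce this corollary from the preceding multivariable theorem by first stripping off the linear part $A$ and then passing from the scale of weighted modulation spaces $M^\infty_{v_s}$ to the Gelfand--Shilov space itself via (the $d$-dimensional analogue of) identity (\ref{Gelfand-Shilov}). First I would reduce to the case $A=I$. Writing $L(x)=Ax$ and $\tilde\psi(x)=x+A^{-1}\phi(x)$, one has the factorization $\psi=L\circ\tilde\psi$, hence $C_\psi=C_{\tilde\psi}\circ C_L$. Since $\mathcal S_\omega(\mathbb R^d)$ is invariant under invertible linear changes of coordinates, $C_L:\mathcal S_\omega(\mathbb R^d)\to\mathcal S_\omega(\mathbb R^d)$ is continuous, so it suffices to treat $C_{\tilde\psi}$. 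Setting $\tilde\phi=A^{-1}\phi$, its components are fixed linear combinations of the components of $\phi$; therefore $\frac{\partial\tilde\phi_i}{\partial x_j}\in\mathcal B_{L_\infty,\omega}(\mathbb R^d)$ for all $i,j$, and $|\tilde\phi(x)|\le\|A^{-1}\|\,|\phi(x)|=O(|x|^b)$. Thus $\tilde\phi$ satisfies exactly the hypotheses of the preceding theorem.

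Next I would record the several-variable version of (\ref{Gelfand-Shilov}), namely $\mathcal S_\omega(\mathbb R^d)=\bigcap_{s>0}M^\infty_{v_s}(\mathbb R^d)$ with $v_s(z)=e^{s(\omega(\overline{z_1})+\omega(\overline{z_2}))}$. This is the multivariable analogue of the cited identity, and it follows from it once one observes, using monotonicity and subadditivity of $\omega$ together with $|\overline{z_1}|,|\overline{z_2}|\le|z|\le|\overline{z_1}|+|\overline{z_2}|$, that $\nu_s\le v_s\le\nu_{2s}$, so that the two intersections coincide. Because $v_s$ increases with $s$, the family $\{M^\infty_{v_s}\}$ is decreasing, and the intersection may be taken over the large integer values of $s$ for which the theorem applies.

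Finally I would assemble the pieces. Fix $f\in\mathcal S_\omega(\mathbb R^d)$ and let $s$ be a large integer; the theorem furnishes $k>0$ with $C_{\tilde\psi}:M^\infty_{m_{s,k}}(\mathbb R^d)\to M^\infty_{v_s}(\mathbb R^d)$ continuous, where $m_{s,k}(z)=e^{s\omega(\overline{z_1})+(s+k)\omega(\overline{z_2})}\le v_{s+k}(z)$. Hence $M^\infty_{v_{s+k}}\hookrightarrow M^\infty_{m_{s,k}}$ continuously, and since $f\in\mathcal S_\omega(\mathbb R^d)\subset M^\infty_{v_{s+k}}$ I obtain $C_{\tilde\psi}f\in M^\infty_{v_s}$ together with the estimate $\|C_{\tilde\psi}f\|_{M^\infty_{v_s}}\lesssim\|f\|_{M^\infty_{m_{s,k}}}\le\|f\|_{M^\infty_{v_{s+k}}}$. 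Letting $s$ range over all large integers shows $C_{\tilde\psi}f\in\bigcap_{s}M^\infty_{v_s}=\mathcal S_\omega(\mathbb R^d)$, while the displayed seminorm bound expresses continuity of $C_{\tilde\psi}$ for the projective-limit topology of $\mathcal S_\omega(\mathbb R^d)$; composing with the continuous map $C_L$ then yields the assertion for $C_\psi$. I do not expect any genuine obstacle here, as the essential content is already contained in the preceding theorem; the only point requiring care is the bookkeeping that matches the perturbed symbol $\tilde\phi$ and the weights $m_{s,k},v_s,v_{s+k}$, so that the loss of regularity built into the theorem is absorbed by the intersection defining $\mathcal S_\omega(\mathbb R^d)$.
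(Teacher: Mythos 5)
Your proposal is correct and follows exactly the route the paper intends: the paper's proof is the single remark that the corollary "follows immediately from the preceding result, together with the invariance of ${\mathcal S}_\omega({\mathbb R}^d)$ under linear changes of coordinates and identity (\ref{Gelfand-Shilov})," and your factorization $C_\psi=C_{\tilde\psi}\circ C_L$ with $\tilde\phi=A^{-1}\phi$, the comparison $m_{s,k}\leq v_{s+k}$, and the multivariable form of (\ref{Gelfand-Shilov}) are precisely the details being left implicit. Nothing to correct.
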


\end{document}